\documentclass[11pt]{amsart}
\usepackage{amssymb,amsmath,amsthm}
\usepackage{amsfonts}
\usepackage{enumerate}
\usepackage{dsfont}
\usepackage{cite}
\usepackage[colorlinks, citecolor=red]{hyperref}
\usepackage{mathrsfs}
\usepackage{epsfig}
\usepackage{lscape}
\usepackage{subfigure}
\usepackage{epstopdf}
\usepackage{caption}
\usepackage{algorithm}
\usepackage{algpseudocode}
\usepackage{booktabs}
\usepackage{multirow}
\usepackage{makecell}
\usepackage{geometry}
\usepackage{paralist}
\usepackage{enumerate}
\usepackage{url}
\usepackage{graphicx,cite}
\usepackage{longtable}
\usepackage{tikz}
\usetikzlibrary{calc}
\usetikzlibrary{arrows.meta}
\usetikzlibrary{decorations.pathreplacing}
\usepackage{lipsum}

\usepackage{tcolorbox}
\tcbuselibrary{theorems}  
\newtcbtheorem[number within=section]{permstrategy}{Permutation Strategy}{%
	colback=white,
	colframe=black,
	colbacktitle=white,
	coltitle=black,
	fonttitle=\bfseries\large,
	boxrule=0.5pt,
	titlerule=0.25pt,
	top=1pt,
	bottom=1pt,
	left=6pt,
	right=6pt
}{strat}

\newcommand{\cA}{{\mathcal A}}
\newcommand{\cB}{{\mathcal B}}
\newcommand{\cC}{{\mathcal C}}
\newcommand{\cD}{{\mathcal D}}

\newcommand{\cE}{{\mathcal E}}

\newcommand{\cG}{{\mathcal G}}
\newcommand{\cH}{{\mathcal H}}
\newcommand{\cI}{{\mathcal I}}

\newcommand{\cK}{{\mathcal K}}

\newcommand{\cP}{{\mathcal P}}

\newcommand{\cR}{{\mathcal R}}

\newcommand{\cT}{{\mathcal T}}
\newcommand{\cU}{{\mathcal U}}

\newcommand{\cX}{{\mathcal X}}

\newcommand{\uf}{\operatorname{unfold}}

\newcommand{\bc}{\operatorname{bcirc}}
\newcommand{\tv}{\operatorname{vec}}

\newcommand{\spn}{\operatorname{span}}

\newtheorem{definition}{Definition}[section]

\newtheorem{prop}[definition]{Proposition}
\newtheorem{theorem}[definition]{Theorem}
\newtheorem{lemma}[definition]{Lemma}

\newtheorem{remark}[definition]{Remark}

\date{}

\begin{document}
\baselineskip 18pt
\title[Linear convergence of TKGK]
{Linear convergence of  Gearhart-Koshy accelerated Kaczmarz methods  for tensor linear systems}

\author{Yijie Wang}
\address{School of Mathematical Sciences, Beihang University, Beijing, 100191, China. }
\email{yj\_wang@buaa.edu.cn}

\author{Yonghan Sun}
\address{School of Mathematical Sciences, Beihang University, Beijing, 100191, China. }
\email{sunyonghan@buaa.edu.cn}

\author{Deren Han}
\address{LMIB of the Ministry of Education, School of Mathematical Sciences, Beihang University, Beijing, 100191, China. }
\email{handr@buaa.edu.cn}

\author{Jiaxin Xie}
\address{LMIB of the Ministry of Education, School of Mathematical Sciences, Beihang University, Beijing, 100191, China. }
\email{xiejx@buaa.edu.cn}

\begin{abstract}
The generalized Gearhart-Koshy acceleration is a recent exact affine search technique designed for the method of cyclic projections onto hyperplanes, i.e., the Kaczmarz method. However, its convergence properties, particularly the linear convergence rate, have not been thoroughly established.
In this paper, we systematically establish the linear convergence of the generalized Gearhart-Koshy accelerated Kaczmarz method for tensor linear systems, proving that it converges linearly to the unique least-norm solution. Our analysis is general and applies to several popular Kaczmarz variants, including incremental, shuffle-once, and random-reshuffling schemes, and demonstrates that this acceleration approach yields a better convergence upper bound compared to the plain Kaczmarz method. We also propose an efficient Gram-Schmidt-based implementation that computes the next iterate in linear time. Building on this implementation, we establish a connection between this acceleration framework and Arnoldi-type Krylov subspace methods, further highlighting its efficiency and potential. Our theoretical results are supported by numerical experiments.
\end{abstract}

\maketitle

\let\thefootnote\relax\footnotetext{Key words: Kaczmarz, Gearhart-Koshy acceleration, tensor linear systems, Gram-Schmidt, Krylov subspace, least-norm solution}

\let\thefootnote\relax\footnotetext{Mathematics subject classification (2020): 65F10, 65F20, 90C25, 15A06, 68W20}

\section{Introduction}

The need to process and analyze large-scale datasets is ubiquitous in signal processing and machine learning. While high-dimensional problems are often reformulated as matrix linear systems and addressed using traditional matrix-based techniques, tensor representations offer unique advantages, as demonstrated in \cite{kilmer2013third, soltani2016tensor, newman2020nonnegative,zhou2017tensor}. For instance, although a set of grayscale images can be arranged as column vectors in a matrix, representing each image as a frontal slice of a third-order tensor preserves the intrinsic multilinear structure of the data and mitigates structural distortions. In this paper, we focus on solving large-scale linear systems for third-order tensors under the $t$-product \cite{kilmer2013third}. 

In particular, we consider the tensor linear system
\begin{equation}\label{tensor_system}
	\cA \ast \cX = \cB, \ \ \cA \in \mathbb{R}^{m \times \ell \times n}, \ \ \cB \in \mathbb{R}^{m \times p \times n},
\end{equation}
where $\ast$ denotes the $t$-product introduced by Kilmer and Martin \cite{kilmer2011factorization}; see Section \ref{section-t-product}. 
Iterative methods for solving such tensor linear systems have attracted increasing attention in recent years, as the $t$-product serves as a \emph{natural generalization} of the matrix-vector product, enabling classical linear algebra concepts to be extended to the tensor domain. Within this algebraic framework, the tensor randomized Kaczmarz (TRK) method was first introduced by Ma and Molitor \cite{ma2022randomized} as a generalization of the classical randomized Kaczmarz (RK) algorithm \cite{Kac37, strohmer2009randomized}. 

Recently, a novel acceleration scheme for the method of cyclic projections (the deterministic Kaczmarz method) was proposed, drawing on the Gearhart-Koshy affine search strategy \cite{gearhart1989acceleration, tam2021gearhart, rieger2023generalized}. This method computes the best Euclidean-norm approximation to the solution within specific affine subspaces \cite{rieger2023generalized}  and offers a promising yet underexplored direction. Nevertheless, the existing convergence results \cite[Theorem 10]{rieger2023generalized} only guarantee convergence to some solution, without establishing any convergence rate. In this paper, we provide a comprehensive convergence theory and establish linear convergence rates for this accelerated approach.

\subsection{Our contribution}
The main contributions of this work are as follows.
 
\begin{itemize}
    \item [1.] 
	We extend several popular Kaczmarz variants, including incremental, shuffle-once, and random-reshuffling schemes, to solve the tensor linear system \eqref{tensor_system}, and then incorporate the generalized Gearhart-Koshy acceleration technique into our tensor Kaczmarz (TK) framework. We prove that the resulting TKGK method converges linearly to the unique least-norm solution for any type of tensor \(\cA\), whether under-determined, over-determined, full-rank, or rank-deficient. Furthermore, we demonstrate that its convergence upper bound decreases monotonically as more previous iterates are utilized, and that it can be strictly smaller than that of the plain TK method; see Theorem \ref{TKGK_convergence_thm} and Remarks \ref{remark-0405} and \ref{rmk_more_history_tighter_bound}.
    \item [2.] 
	To determine the next iterate, a linear system must be solved after every epoch of the TKGK method. By exploiting the specific structure of the problem, we propose a truncated Gram-Schmidt process to deal with this linear system. This novel implementation can not only achieve linear-time complexity, but also overcome the limitation of the tridiagonal-based approach \cite[Section 6]{rieger2023generalized}, which requires tensor extraction and concatenation operations; see Section \ref{section4-1}.
	\item [3.] 	Leveraging the proposed Gram-Schmidt-based implementation, we can further characterize the tensor Hessenberg structure and Arnoldi decomposition of TKGK updates, provided that all previous iterates are used and incremental or shuffle-once strategies are employed. We then show that the TKGK method recovers an Arnoldi-type Krylov subspace method, offering insight into the connection between the Gearhart-Koshy acceleration scheme and Krylov subspace techniques; see Section \ref{subsec_arnoldi}. This connection also helps justify the potential and practical appeal of the Gearhart-Koshy acceleration framework. The conclusions are validated by the results of our numerical experiments. 
\end{itemize}


\subsection{Related work}


The Kaczmarz method, introduced in 1937 \cite{karczmarz1937angenaherte}, is a classic and efficient row-action iterative solver for the linear system $Ax=b$, where $A\in\mathbb{R}^{m\times n}$ and $b\in\mathbb{R}^m$. In computed tomography, it is also known as the algebraic reconstruction technique (ART) \cite{herman1993algebraic,gordon1970algebraic}. The method has applications ranging from tomography to digital signal processing. A significant advance is the RK method by Strohmer and Vershynin \cite{strohmer2009randomized}, which, under row-norm-proportional sampling, achieves linear convergence in expectation for consistent systems. Since then, there is a large amount of work on the development of the Kaczmarz-type methods including block Kaczmarz methods \cite{necoara2019faster,needell2014paved,gower2015randomized,xie2025randomized}, accelerated RK methods \cite{liu2016accelerated,han2025pseudoinverse,loizou2020momentum,zeng2024adaptive}, greedy RK methods \cite{bai2018greedy,gower2021adaptive,su2026convergence,su2024greedy}, randomized Douglas-Rachford methods \cite{Han2022-xh}, etc. Recently, RK-type algorithms have also been extended to tensor linear systems within the $t$-product framework, including the TRK method \cite{ma2022randomized}, tensor sketch-and-project method \cite{ma2022randomized,bao2022randomized,tang2023sketch}, regularized Kaczmarz method for tensor recovery \cite{chen2021regularized}, structured factorization-based methods \cite{castillo2024randomized,castillo2025block}, frontal-slice-based method \cite{luo2024frontal}, and accelerated tensor RK methods \cite{liao2024accelerated}.

In fact, the RK method is a special case of the stochastic gradient descent (SGD) method for convex optimization when applied to quadratic objective functions \cite{robbins1951stochastic,Han2022-xh,needell2014stochasticMP,ma2017stochastic,zeng2023randomized}. 
Consequently, many developments in SGD can be adapted to RK. For instance, the random reshuffling (RR) sampling scheme \cite{ahn2020sgd, mishchenko2020random, ying2018stochastic, nguyen2021unified}, which uses sampling without replacement within each epoch, has been introduced into Kaczmarz-type methods \cite{han2025simple}. Unlike standard SGD with independent sampling, RR introduces statistical dependence and eliminates unbiased gradient estimates, making its analysis more challenging.
Nonetheless, RR has been empirically shown to outperform standard SGD in many applications, due in part to its implementation simplicity and the utilization of all samples within each epoch. In this paper, we further consider two additional sampling strategies commonly used in SGD, namely the shuffle-once (SO) and incremental strategies (IS) \cite{mishchenko2020random, safran2020good}. Specifically, our Kaczmarz method can be implemented with IS, SO, and RR sampling schemes, and our convergence analysis provides a unified theoretical guarantee that applies to all three strategies; see Theorem \ref{trrk_convergence_thm}.

The Gearhart-Koshy acceleration scheme was originally proposed as an exact search strategy for alternating projections onto linear subspaces \cite{gearhart1989acceleration}, and was later extended to affine subspaces \cite{tam2021gearhart}. Since the Kaczmarz algorithm is a projection method, it is natural to apply Gearhart-Koshy acceleration to Kaczmarz iterations. More recently, the generalized Gearhart-Koshy acceleration has been incorporated into cyclic (or incremental) Kaczmarz methods \cite{rieger2023generalized}. We note that the main distinction between the generalized and original Gearhart-Koshy acceleration schemes is that the generalized version leverages information from multiple previous iterates, whereas the original uses only a single previous iterate. Subsequently, it was shown in  \cite{hegland2023generalized} that the cyclic (or incremental) block Kaczmarz method with generalized Gearhart-Koshy acceleration can admit a Krylov subspace interpretation when all previous iterates are used. Moreover, if the projection of the coefficient matrix onto the range of $A^\top$ is symmetric positive definite, then such Krylov subspace method converges linearly \cite[Theorem 11]{hegland2023generalized}. 
In this paper, we extend this line of research by applying generalized Gearhart-Koshy acceleration to Kaczmarz methods with IS, SO, and RR sampling strategies. Moreover, our convergence analysis does not require assumptions on the coefficient matrix or impose constraints on the number of previous iterates used, which refines and generalizes existing theoretical results on Gearhart-Koshy acceleration.

Finally, we note that several efficient implementations of Gearhart-Koshy acceleration have been investigated. A tridiagonal-based approach was proposed in \cite[Section 6]{rieger2023generalized}, however, its repeated extraction and concatenation operations introduce considerable memory movement and communication overhead, which may limit practical efficiency. Our Gram-Schmidt-based implementation can avoid these operations and also maintain linear computational cost; see Section~\ref{section4-1}. Additionally, when all previous iterates are used, the authors in \cite[Section 5]{hegland2023generalized} presented an efficient Gram-Schmidt-based scheme for constructing an orthogonal basis for the associated Krylov subspace. More recently, Sun et al.~\cite{sun2025connecting} established a connection between randomized iterative methods and Krylov subspace methods via Gram-Schmidt orthogonalization, further identifying it as a Lanczos-type method when all previous iterates are used and the sketching matrix is deterministic. The present work differs from these approaches in that, when all previous iterates are utilized, our orthogonalization procedure corresponds to an Arnoldi process. In addition, in contrast to \cite{hegland2023generalized}, our approach remains valid for any choice of the truncation parameter, rather than requiring all previous iterates to be used.

\subsection{Organization} 
The remainder of the paper is organized as follows.
After introducing some notations and the TK method in Section~\ref{Sec_Preliminaries}, 
we propose the TKGK method and establish its linear convergence in Section~\ref{Sec_TKGK}.
Section~\ref{Sec_efficient_GS_implementation} develops an efficient Gram-Schmidt-based implementation of TKGK 
and analyzes its Arnoldi-type Krylov subspace properties.
Numerical experiments are presented in Section~\ref{Sec_numer_exp} to verify our theoretical results, and we conclude the paper in Section~\ref{Sec_conclu_rmk}.
Proofs of the results are provided in the Appendix.

\section{Preliminaries}\label{Sec_Preliminaries}
\subsection{Basic notation}

We use  uppercase letters such as $A$ for matrices and calligraphic letters such as $\mathcal{A}$ for tensors. For an integer $m\geq 1$, let $[m]:=\{1,2,\ldots,m\}$. For any matrix $A\in\mathbb{R}^{m\times n}$, we use $A^{\top}$, $A^{\dagger}$, $\|A\|_{F}$, and $\|A\|_{2}$ to denote the transpose, the Moore-Penrose pseudoinverse, the Frobenius norm, and the spectral norm of $A$, respectively. 

\subsection{Tensor basics}
\label{section-t-product}
In this subsection, we briefly review key definitions and fundamental concepts in tensor algebra. We adopt the notational conventions from \cite{kilmer2011factorization, kilmer2013third, miao2020generalized}.

For a third-order tensor \(\mathcal{A} \in \mathbb{R}^{m \times \ell \times n}\), we denote its \((i, j, k)\) entry by \(\mathcal{A}_{ijk}\), and its horizontal, lateral, and frontal slices by \(\mathcal{A}_{i::}\), \(\mathcal{A}_{:i:}\), and \(\mathcal{A}_{::i}\), respectively. For notational convenience, the \(i\)th frontal slice \(\mathcal{A}_{::i}\) is denoted by \(A_i\). The block circulant matrix of \(\mathcal{A}\), denoted \(\operatorname{bcirc}(\mathcal{A})\), is defined by
\[
\operatorname{bcirc}(\mathcal{A}) :=
\begin{bmatrix}
	A_1 & A_n & \cdots & A_2 \\
	A_2 & A_1 & \cdots & A_3 \\
	\vdots & \vdots & \ddots & \vdots \\
	A_n & A_{n-1} & \cdots & A_1
\end{bmatrix} \in \mathbb{R}^{mn \times \ell n}.
\]
The \(\operatorname{unfold}(\cdot)\) and \(\operatorname{fold}(\cdot)\) operators are defined as
\[
\operatorname{unfold}(\mathcal{A}) :=
\begin{bmatrix}
	A_1 \\
	A_2 \\
	\vdots \\
	A_n
\end{bmatrix} \in \mathbb{R}^{mn \times \ell}, \quad
\operatorname{fold}\left(
\begin{bmatrix}
	A_1 \\
	A_2 \\
	\vdots \\
	A_n
\end{bmatrix}
\right) := \mathcal{A}.
\]
To be precise about reshaping, we define the tube-wise vectorization operator \(\tv(\cdot)\) as follows. For \(\mathcal{A} \in \mathbb{R}^{m \times \ell \times n}\),
\[
\tv(\mathcal{A}) :=
\begin{pmatrix}
	\tv(A_1) \\
	\vdots \\
	\tv(A_n)
\end{pmatrix},
\]
where \(\tv(A_i)\) denotes the standard column-wise vectorization of matrix \(A_i\).

Given tensors \(\mathcal{A} \in \mathbb{R}^{m \times \ell \times n}\) and \(\mathcal{B} \in \mathbb{R}^{\ell \times p \times n}\), the $t$-product \(\mathcal{A} * \mathcal{B}\) is defined as
\[
\mathcal{A} * \mathcal{B} := \operatorname{fold}(\operatorname{bcirc}(\mathcal{A}) \cdot \operatorname{unfold}(\mathcal{B})).
\]
The pseudoinverse of \(\mathcal{A} \in \mathbb{R}^{m \times \ell \times n}\), denoted by \(\mathcal{A}^{\dagger}\), is the tensor satisfying
\[
\operatorname{bcirc}(\mathcal{A}^{\dagger}) = \operatorname{bcirc}(\mathcal{A})^{\dagger}.
\]
The transpose of \(\mathcal{A}\), denoted \(\mathcal{A}^{\top}\), is the \(\ell \times m \times n\) tensor obtained by transposing each frontal slice and reversing the order of transposed slices \(2\) through \(n\). It satisfies
\[
\operatorname{bcirc}(\mathcal{A}^{\top}) = \operatorname{bcirc}(\mathcal{A})^{\top}.
\]
The identity tensor \(\mathcal{I} \in \mathbb{R}^{m \times m \times n}\) is defined such that its first frontal slice is the \(m \times m\) identity matrix and all remaining frontal slices are zero.
The inner product between two tensors \(\mathcal{A}, \mathcal{B} \in \mathbb{R}^{m \times \ell \times n}\) is defined as
\[
\langle \mathcal{A}, \mathcal{B} \rangle := \sum_{i,j,k} \mathcal{A}_{ijk} \mathcal{B}_{ijk}.
\]
For any $\ \mathcal{B} \in \mathbb{R}^{\ell \times p \times n},\, \mathcal{C} \in \mathbb{R}^{m \times p \times n}$, it satisfies the identity
$
\langle \mathcal{A} * \mathcal{B}, \mathcal{C} \rangle = \langle \mathcal{B}, \mathcal{A}^{\top} * \mathcal{C} \rangle$.
The Frobenius norm of \(\mathcal{A} \in \mathbb{R}^{m \times \ell \times n}\) are defined as
$
\| \mathcal{A} \|_F := \sqrt{\langle \mathcal{A}, \mathcal{A} \rangle} = 
\left( \sum_{i,j,k} \mathcal{A}_{ijk}^2 \right)^{1/2}.
$
The \(K\)-range of \(\mathcal{A}\) is defined as
$
\operatorname{range}_K(\mathcal{A}) := \left\{ \mathcal{A} * \mathcal{Y} \mid \mathcal{Y} \in \mathbb{R}^{\ell \times p \times n} \right\}.
$
It satisfies
\begin{align}\label{range_Adagger_eq_range_Atop}
	\operatorname{range}_K(\mathcal{A}^{\top}) = \operatorname{range}_K(\mathcal{A}^{\dagger}).
\end{align}
Given tensors  $\cX^1,\ldots, \cX^k \in \mathbb{R}^{\ell\times p\times n}$, their affine and linear spans are denoted as
\[
\operatorname{aff}\{ \cX^1,\ldots, \cX^k\} := \left\{\sum_{i=1}^k \alpha_i \cX^i \, \ \left |  \ \sum_{i=1}^k \alpha_i = 1, \alpha_i \in\mathbb{R} \right.\right\}\]
and $ \operatorname{span}\{ \cX^1,\ldots, \cX^k \}$, respectively.

\subsection{The tensor Kacmzarz method}
\label{section-TK}

The tensor Kaczmarz (TK) method is a Kaczmarz-type iterative algorithm designed for solving tensor linear systems under the $t$-product formulation. Consider the following projector:
\[
\mathcal{P}_{i}(\mathcal{X}) := \mathcal{X} - \mathcal{A}_{i::}^{\dagger} \ast \left( \mathcal{A}_{i::} \ast \mathcal{X} - \mathcal{B}_{i::} \right),
\]
which projects any point \(\mathcal{X} \in \mathbb{R}^{\ell \times p \times n}\) onto the affine subspace \(\{\mathcal{X} \mid \mathcal{A}_{i::} \ast \mathcal{X} = \mathcal{B}_{i::}\}\). Leveraging the algebraic properties of the $t$-product, the TK procedure \cite{ma2022randomized}  is outlined in Algorithm~\ref{tensor-k}. In this paper, we consider the  permutation strategies \(\pi_k = (\pi_{k,1}, \pi_{k,2}, \ldots, \pi_{k,m})\) introduced in Strategy  \ref{strat:perm-strategies}, which have been widely studied in the literature.


\begin{permstrategy}{}{perm-strategies} 

	{\bf Incremental strategy (IS):} A deterministic cyclic order is used in every iteration, with $\pi_k = (1, 2, \ldots, m)$ for all $k \geq 0$ \cite{mishchenko2020random, safran2020good}.
	
	{\bf Shuffle-once (SO):} A single random permutation $(\pi_{0,1}, \pi_{0,2}, \ldots, \pi_{0,m})$ of $[m]$ is generated once at initialization and reused in all subsequent epochs, i.e., $\pi_k = (\pi_{0,1}, \pi_{0,2}, \ldots, \pi_{0,m})$ for all $k \geq 1$ \cite{mishchenko2020random, safran2020good}.

	{\bf Random reshuffling (RR):} At each epoch $k$, a fresh random permutation $\pi_k$ is generated by sampling without replacement from $[m]$ \cite{ahn2020sgd, mishchenko2020random, ying2018stochastic, nguyen2021unified}.
\end{permstrategy}

\begin{algorithm}[htpb]
	\caption{The tensor Kaczmarz (TK) method \label{tensor-k}}
	\begin{algorithmic}
		\Require
		$\cA \in \mathbb{R}^{m\times \ell \times n}$, $\cB \in \mathbb{R}^{m\times p\times n}$, $k=0$, an initial $\cX^0\in \mathbb{R}^{\ell\times p\times n}$, and a permutation strategy $  \Pi\in \{ \text{IS}, \text{RR}, \text{SO}\}$ as defined in Strategy \ref{strat:perm-strategies}.
		\begin{enumerate}
			\item[1:] Set $\cX^{k}_0=\cX^k$ and generate \(\pi_k = (\pi_{k,1}, \ldots, \pi_{k,m})\)  according to strategy $\Pi$.
			\item[2:] {\bf for $i=1,\ldots,m$ do}
			$$
			\cX_{i}^{k}=\mathcal{P}_{\pi_{k,i}}\left(\mathcal{X}_{i-1}^{k}\right)=\mathcal{X}_{i-1}^{k} - \mathcal{A}_{\pi_{k,i}::}^{\dagger} \ast \left( \mathcal{A}_{\pi_{k,i}::} \ast \mathcal{X}_{i-1}^{k} - \mathcal{B}_{\pi_{k,i}::} \right). 
			$$
			{\bf end for}
			\item[3:] Set
			$
			\cX^{k+1}=\cX_{m}^k.
			$
			\item[4:] If the stopping rule is satisfied, stop and go to output. Otherwise, set $k=k+1$ and return to Step $1$.
		\end{enumerate}
		
		\Ensure
		The approximate solution.
	\end{algorithmic}
\end{algorithm}

Let  $ \rho_{\pi_{k}} :=\left\|\bc\left(\cT_{\pi_{k}} \ast \cA^{\dagger} \ast \cA\right) \right\|_{2}^2$ with
\begin{equation}
	\label{Tpi}
	\cT_{\pi_{k}}=  \left(\cI - \cA_{\pi_{k, m}::}^{\dagger} \ast \cA_{\pi_{k, m}::}\right) \ast \cdots \ast \left(\cI - \cA_{\pi_{k, 1}::}^{\dagger} \ast \cA_{\pi_{k, 1}::}\right).
\end{equation}
We now state the convergence result of Algorithm \ref{tensor-k}.
\begin{theorem}
	\label{trrk_convergence_thm}
	Suppose that the tensor linear system $\cA \ast \cX = \cB$ is consistent, and $\cX^0 \in \mathbb{R}^{\ell\times p\times n}$ is an initial tensor.
	Let $\cX_{*}^{0} = \cA^{\dagger} \ast \cB + \left(\cI-\cA^{\dagger}\ast \cA\right) \ast \cX^{0}$. 
	Then the iteration sequence $\{ \cX^k \}$ generated by Algorithm \ref{tensor-k} satisfies
	\[\|\cX^{k+1}-\cX_{*}^{0}\|_{F}^2 \leq \rho_{\pi_{k}}  \|\cX^{k}-\cX_{*}^{0}\|_{F}^2, \]
	with $\rho_{\pi_k} < 1$. 
\end{theorem}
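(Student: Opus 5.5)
The plan is to pass to matrices via $\bc$, since $\bc$ turns $t$-products into matrix products, transposes into transposes, and pseudoinverses into pseudoinverses. Write $A=\bc(\cA)$ and $A_i=\bc(\cA_{i::})$, and set $P_i := I - A_i^\dagger A_i$, the orthogonal projector onto $\ker A_i$. Then $\bc(\cT_{\pi_k}) = P_{\pi_{k,m}}\cdots P_{\pi_{k,1}}=:T$.

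First, $\cX_*^0$ solves the system. Consistency gives $\cA\ast\cA^\dagger\ast\cB=\cB$, and $\cA\ast(\cI-\cA^\dagger\ast\cA)=0$. Hence $\cA_{i::}\ast\cX_*^0=\cB_{i::}$ for every $i$, so each $\cP_i$ fixes $\cX_*^0$. Put $\cE_i=\cX_i^k-\cX_*^0$. Each step gives $\cE_i=(\cI-\cA_{\pi_{k,i}::}^\dagger\ast\cA_{\pi_{k,i}::})\ast\cE_{i-1}$, so after the epoch $\cX^{k+1}-\cX_*^0=\cT_{\pi_k}\ast(\cX^k-\cX_*^0)$.

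Next, the error stays in $\operatorname{range}_K(\cA^\top)$. Initially $\cX^0-\cX_*^0=\cA^\dagger\ast\cA\ast\cX^0-\cA^\dagger\ast\cB$, which lies in $\operatorname{range}_K(\cA^\dagger)=\operatorname{range}_K(\cA^\top)$ by \eqref{range_Adagger_eq_range_Atop}. Each projector maps this set into itself, because $\cA_{i::}^\dagger\ast\cA_{i::}\ast\cE$ lies in $\operatorname{range}_K(\cA_{i::}^\top)\subseteq\operatorname{range}_K(\cA^\top)$. By induction $\cX^k-\cX_*^0\in\operatorname{range}_K(\cA^\top)$. Therefore $\cX^k-\cX_*^0=\cA^\dagger\ast\cA\ast(\cX^k-\cX_*^0)$, and
\[
\cX^{k+1}-\cX_*^0=\cT_{\pi_k}\ast\cA^\dagger\ast\cA\ast(\cX^k-\cX_*^0).
\]
Unfolding turns each lateral column into a vector $\bc(\cdot)$ acting on $\uf(\cdot)$, and $\|\cdot\|_F=\|\uf(\cdot)\|_F$. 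Applying the spectral norm bound to each column of $\uf(\cX^k-\cX_*^0)$ gives $\|\cX^{k+1}-\cX_*^0\|_F^2\le\rho_{\pi_k}\|\cX^k-\cX_*^0\|_F^2$.

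The main obstacle is showing $\rho_{\pi_k}<1$, that is, $\|TP\|_2<1$ with $P=A^\dagger A$ the projector onto $R:=\operatorname{range}(A^\top)$. Since $\|T\|_2\le1$, it suffices to rule out a unit vector $v$ with $\|TPv\|=1$. Take such a $v$ and set $u=Pv$; then $\|u\|\le1$, so $\|u\|=1$ and $v=u\in R$. Every factor $P_i$ is an orthogonal projector, so the chain of inequalities $1=\|Tu\|\le\|P_{\pi_{k,m-1}}\cdots P_{\pi_{k,1}}u\|\le\cdots\le\|P_{\pi_{k,1}}u\|\le\|u\|=1$ forces equality throughout. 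Equality for an orthogonal projector means the vector is fixed. Going in order, $P_{\pi_{k,1}}u=u$, then $P_{\pi_{k,2}}u=u$, and so on, so $u\in\ker A_i$ for all $i$. Because $\pi_k$ is a permutation it covers all of $[m]$, and so $u\in\bigcap_i\ker A_i$.

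It remains to identify $\bigcap_i\ker A_i$ with $\ker A$. For this, note that $\bc(\cA)$ is a row permutation of the stacked matrices $\bc(\cA_{i::})$, since each horizontal slice contributes its block rows. Then $u\in\ker A\cap R=\{0\}$, contradicting $\|u\|=1$. Finite dimension makes the supremum attained, so $\rho_{\pi_k}<1$. For IS and SO there are finitely many permutations, which gives a uniform constant; the same holds for RR if desired.
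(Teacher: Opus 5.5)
Your proposal is correct and follows essentially the same route as the paper. You keep $\cX^k-\cX_*^0$ in $\operatorname{range}_K(\cA^\dagger)$ so that $\cA^\dagger\ast\cA$ acts as the identity on the error, bound $\|\cT_{\pi_k}\ast(\cX^k-\cX_*^0)\|_F$ by the spectral norm of $\bc(\cT_{\pi_k}\ast\cA^\dagger\ast\cA)$, and get $\rho_{\pi_k}<1$ because a nonzero vector in $\operatorname{range}(\bc(\cA)^\top)$ cannot lie in every $\ker\bc(\cA_{i::})$. Your equality-chain version of that last step is slightly tighter than the paper's. The paper applies the $i_0$-th slice projector directly to $\bc(\cA^\dagger)\bc(\cA)z$, which tacitly requires $i_0$ to be the first index in $\pi_k$ whose projector moves that vector.
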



\section{Tensor Kaczmarz with Generalized Gearhart-Koshy acceleration}\label{Sec_TKGK}

In this section, we investigate the generalized Gearhart-Koshy acceleration technique applied to the TK method outlined in Algorithm \ref{tensor-k}, building upon the recent work of Rieger \cite{rieger2023generalized}.
For convenience, for any permutation \(\pi_k = (\pi_{k,1}, \pi_{k,2}, \ldots, \pi_{k,m})\), we define the composite projection operator as
\begin{equation}\label{full-projection-oper}
	\mathcal{P}_{\pi_k}(\mathcal{X}) := \mathcal{P}_{\pi_{k,m}} \circ \cdots \circ \mathcal{P}_{\pi_{k,1}}(\mathcal{X}).
\end{equation}
Using this notation,  Algorithm~\ref{tensor-k} can be concisely expressed as
\[
\mathcal{X}^{k+1} = \mathcal{P}_{\pi_k}(\mathcal{X}^k).
\]

Let \(\tau\) be a fixed positive integer.
At the \(k\)-th iteration, the next iterate \(\cX^{k+1}\) is obtained by performing an exact line search over an affine subspace spanned by the most recent \(\tau\) iterates and the TK update \(\mathcal{P}_{\pi_k}(\cX^k)\). Specifically, we compute
\begin{equation}
	\label{RIM_subspace}
	\begin{aligned}
		\cX^{k+1} = \arg\min & \ \| \cX - \cX^0_{*} \|_F^2 \\
		\text{subject to} \quad & \cX \in \Pi_{k} := \operatorname{aff} \left\{ \cX^{j_{k,\tau}}, \ldots, \cX^k, \mathcal{P}_{\pi_k}(\cX^k) \right\},
	\end{aligned}
\end{equation}
where \(\cX^{0}_{*}= \cA^\dagger \ast \cB + (\cI - \cA^\dagger \ast \cA) \ast \cX^0\)  is the orthogonal projection of the initial point \(\cX^0\) onto the solution set \(\{\cX \in \mathbb{R}^{\ell \times p \times n} : \cA \ast \cX = \cB\}\), and \( j_{k,\tau} := \max\{k - \tau + 1, 0\} \) denotes the index of the earliest iterate used in the current affine subspace. A geometric illustration of the generalized Gearhart-Koshy acceleration is provided in Figure~\ref{GGK-figure}.

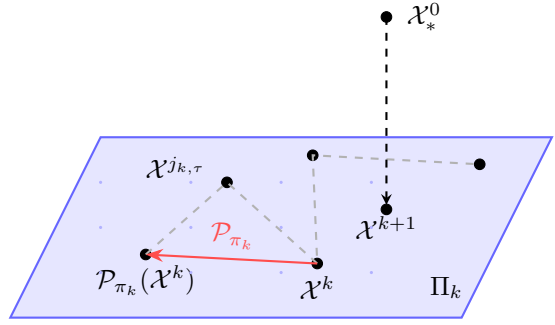
\begin{figure}[hptb]
	\centering
	\begin{tikzpicture}[>=Stealth]		
		\fill[blue!10, opacity=0.4] (0,0)--(6,0)--(7.2,2.4)--(1.2,2.4)--cycle;
		\draw[blue!60, thick] (0,0)--(6,0)--(7.2,2.4)--(1.2,2.4)--cycle;
		\filldraw[black] (1.8,0.84) circle [radius=2pt];      
		\filldraw[black] (2.88,1.8) circle [radius=2pt];
		\filldraw[black] (4.08,0.72) circle [radius=2pt];
		\filldraw[black] (6.24,2.04) circle [radius=2pt];
		\filldraw[black] (4.02,2.16) circle [radius=2pt];
		\filldraw[black] (5,4.0) circle [radius=2pt];
		\filldraw[black] (5,1.44) circle [radius=2pt];      
		\draw[dashed, gray!60, line width=0.8pt] (1.8,0.84) -- (2.88,1.8) -- (4.08,0.72) -- (4.02,2.16) -- (6.24,2.04);
		\draw (1.8,0.5) node[font=\small] {$\mathcal{P}_{\pi_k}(\cX^k)$};
		\draw (4.08,0.4) node[font=\small] {$\cX^k$};
		\draw (2.2,2.0) node[font=\small] {$\cX^{j_{k,\tau}}$};
		\draw (5.5,4.0) node[font=\small] {$\cX_{*}^{0}$};
		\draw (5.0,1.2) node[font=\small] {$\cX^{k+1}$};
		\draw (5.8,0.4) node[font=\small] {$\Pi_{k}$};
		\draw [dashed,-stealth, line width=0.8pt] (5,4.0) -- (5,1.5);
		\draw [->, red!70, thick, line width=0.8pt] ($(4.08,0.72)$) -- ($(1.8,0.84)$)
		node[midway, above, sloped, font=\small] {$\mathcal{P}_{\pi_k}$};
		\foreach \x in {1.2,2.4,3.6,4.8} {
			\foreach \y in {0.6,1.2,1.8} {
				\fill[blue!30, opacity=0.3] (\x,\y) circle [radius=0.6pt];
			}
		}
	\end{tikzpicture}
	\caption{A geometric interpretation of the generalized Gearhart-Koshy acceleration. The next iterate \(\cX^{k+1}\) is determined  by finding the point in the affine subspace \(\Pi_{k} = \operatorname{aff} \left\{ \cX^{j_{k,\tau}}, \ldots, \cX^k, \mathcal{P}_{\pi_k}(\cX^k) \right\}\) that is closest to \(\cX_{*}^{0}\).
	}    
	\label{GGK-figure}
\end{figure}

To explicitly formulate the optimization problem \eqref{RIM_subspace}, we define the matrix \(M_k\) as
\begin{equation} \label{dfmk}
	M_k := \begin{bmatrix}\vec{x}^{j_{k,\tau}} - \vec{x}^k, \ldots,\vec{x}^{k-1} - \vec{x}^k, \vec{d}^k\end{bmatrix} \in \mathbb{R}^{\ell pn \times (k-j_{k,\tau}+1)},	
\end{equation}
where \(\vec{x}^i := \tv(\cX^i)\) and  \(\vec{d}^k := \tv(\mathcal{P}_{\pi_k}(\cX^k) - \cX^k)\) denote the tube vectorization of \(\cX^i\) and $\mathcal{P}_{\pi_k}(\cX^k) - \cX^k$, respectively. 
Then, the minimization problem in \eqref{RIM_subspace} reduces to solving a least-squares problem of the form
\[
s^k = \arg\min_{s \in \mathbb{R}^{k - j_{k,\tau} + 1}} \left\| \vec{x}^k - \vec{x}^{0}_{*} + M_k s \right\|_2^2,
\]
whose optimality condition leads to the normal equation
\begin{equation} \label{optcond2}
	M_k^\top M_k s^k = -M_k^\top (\vec{x}^k - \vec{x}^{0}_{*}).
\end{equation}


We now analyze the right-hand side of \eqref{optcond2}. Since \(\cX^k\) is the orthogonal projection of \(\cX^0_{*}\) onto the affine subspace \(\Pi_{k-1}\), as illustrated in Figure~\ref{GGK-figure}, it follows that
\[
-\langle\vec{x}^{j_{k,\tau}+i-1}- \vec{x}^{k} , \vec{x}^{k}-\vec{x}_{*}^{0}   \rangle=\langle \cX^{j_{k,\tau}+i-1}- \cX^{k} , \cX_{*}^{0} - \cX^{k} \rangle =0,\ \forall \ 1\leq i\leq k-j_{k,\tau}.
\]
 Define
\begin{align}\label{def_gammak}
	\gamma_k:=-\left\langle \vec{d}^k , \vec{x}^{k}-\vec{x}_{*}^0 \right\rangle=\left\langle P_{\pi_{k}}\left(\cX^{k}\right)-\cX^{k}, \cX_{*}^{0} - \cX^{k} \right\rangle .
\end{align}
Then, the normal equation \eqref{optcond2} can be rewritten more concisely as
\begin{equation} \label{eq-s_k}
	M_k^\top M_k s^k = \gamma_k\, e^{k-j_{k,\tau}+1}_{k-j_{k,\tau}+1},
\end{equation}
where \(e^{k-j_{k,\tau}+1}_{k-j_{k,\tau}+1} \in \mathbb{R}^{k-j_{k,\tau}+1}\) is the standard basis vector with a $1$ at the last coordinate.


Now, the generalized Gearhart-Koshy acceleration reduces to solving the linear system \eqref{eq-s_k}. However, this process presents two main challenges. First, as evident from the definition of \(\gamma_k\), it depends on the true solution \(\cX_{*}^{0}\), which is typically unknown in practice. As a result, \(\gamma_k\) appears to be intractable. Second, it is unclear whether the linear system \eqref{eq-s_k} admits a unique solution, since this depends on the rank of the matrix \(M_k\).

The following results address both issues under the assumption that \(\cX^k\) is not a solution to the tensor linear system \(\cA \ast \cX = \cB\). For convenience,
we define $r_{\pi_{k}}:\mathbb{R}^{\ell\times p\times n} \rightarrow \mathbb{R}^{mn\ell \times p}$ as
\begin{align}\label{def_r_k}
	r_{\pi_{k}}\left(\cX\right):=\begin{bmatrix} 
		\uf\left(\cA_{\pi_{k, 1}::}^\dagger \ast \left(\cA_{\pi_{k, 1}::} \ast \cX - \cB_{\pi_{k, 1}::}\right)\right)\\
		\uf\left(\cA_{\pi_{k, 2}::}^\dagger \ast \left(\cA_{\pi_{k, 2}::} \ast P_{\pi_{k, 1}}\left(\cX\right) - \cB_{\pi_{k, 2}::}\right)\right)\\
		\vdots\\
		\uf\left(\cA_{\pi_{k, m}::}^\dagger \ast \left(\cA_{\pi_{k, m}::} \ast P_{\pi_{k, m-1}} \circ \ldots \circ P_{\pi_{k, 1}}\left(\cX\right) - \cB_{\pi_{k, m}::}\right)\right)
	\end{bmatrix}.
\end{align}

\begin{lemma} \label{equiv}
	A tensor \(\cX^k\) satisfies \(\cA \ast \cX^k = \cB\) if and only if \(r_{\pi_k}(\cX^k) = 0\), if and only if \(\mathcal{P}_{\pi_k}(\cX^k) = \cX^k\).
	Moreover, it follows that $\cA\ast\cX^k\neq\cB$ implies $\cA\ast\cX^i\neq\cB, i=0,1,\ldots,k$ for any $k\geq0$.
\end{lemma}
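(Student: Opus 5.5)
We prove the three conditions equivalent by a cycle of implications, and then derive the ``moreover'' claim.

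Throughout, write $\cX_0:=\cX^k$ and $\cX_i:=\mathcal{P}_{\pi_{k,i}}(\cX_{i-1})$ for $i\in[m]$. Then the $i$-th block of $r_{\pi_k}(\cX^k)$ is $\uf(\cA_{\pi_{k,i}::}^\dagger\ast(\cA_{\pi_{k,i}::}\ast\cX_{i-1}-\cB_{\pi_{k,i}::}))=\uf(\cX_{i-1}-\cX_i)$. Since $\uf$ is injective, $r_{\pi_k}(\cX^k)=0$ holds iff $\cX_0=\cX_1=\cdots=\cX_m$.

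\emph{First implication.} Suppose $\cA\ast\cX^k=\cB$. Then every row block satisfies $\cA_{i::}\ast\cX^k=\cB_{i::}$, so each projector fixes $\cX^k$. Hence all blocks of $r_{\pi_k}(\cX^k)$ vanish, by induction along $i$.

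\emph{Second implication.} If $r_{\pi_k}(\cX^k)=0$, then $\cX_m=\cX_0$, i.e.\ $\mathcal{P}_{\pi_k}(\cX^k)=\cX^k$.

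\emph{Third implication (the main step).} Suppose $\mathcal{P}_{\pi_k}(\cX^k)=\cX^k$. Each $\mathcal{P}_i$ is the orthogonal projection onto the affine set $\{\cX:\cA_{i::}\ast\cX=\cB_{i::}\}$; this uses $\operatorname{bcirc}(\cA_{i::}^\dagger)=\operatorname{bcirc}(\cA_{i::})^\dagger$ and consistency. Pick any solution $\cX_\star$ of the whole system. Since $\cX_\star$ lies in every one of these affine sets, the Pythagorean identity gives
\[
\|\cX_{i-1}-\cX_\star\|_F^2=\|\cX_i-\cX_\star\|_F^2+\|\cX_{i-1}-\cX_i\|_F^2 .
\]
Summing over $i\in[m]$ yields
\[
\|\cX^k-\cX_\star\|_F^2=\|\mathcal{P}_{\pi_k}(\cX^k)-\cX_\star\|_F^2+\sum_{i}\|\cX_{i-1}-\cX_i\|_F^2 .
\]
The fixed-point assumption makes the first two terms equal, so every increment $\cX_{i-1}-\cX_i$ vanishes. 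Thus $\cX^k=\cX_{i-1}$ is fixed by each $\mathcal{P}_{\pi_{k,i}}$. Because $\pi_k$ is a permutation of $[m]$, $\cX^k$ lies in every row-block affine set, which means $\cA\ast\cX^k=\cB$. Alternatively, one could cite $\rho_{\pi_k}<1$ from Theorem~\ref{trrk_convergence_thm}, but the argument above is self-contained. The only delicate point is checking that the pseudoinverse-based $\mathcal{P}_i$ really is an orthogonal projection in the Frobenius inner product. This follows by passing to $\operatorname{bcirc}$/vectorized form, where $\cA_{i::}^\dagger\ast\cA_{i::}$ corresponds to the symmetric idempotent matrix $\operatorname{bcirc}(\cA_{i::})^\dagger\operatorname{bcirc}(\cA_{i::})$.

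\emph{The ``moreover'' claim.} We prove the contrapositive: if some $\cX^i$ with $i<k$ solves the system, then so does $\cX^k$. By the equivalence just shown, $\mathcal{P}_{\pi_i}(\cX^i)=\cX^i$. Hence the affine set $\Pi_i$ in \eqref{RIM_subspace} contains $\cX^i$, which solves the system. The next iterate $\cX^{i+1}$ is the point of $\Pi_i$ closest to $\cX^0_*$. The iterates satisfy $\cX^{j}-\cX^0\in\operatorname{range}_K(\cA^\top)$, by induction using \eqref{range_Adagger_eq_range_Atop} and the fact that $\Pi_j$ is built from such points. Hence the only solution among them is $\cX^0_*$, so $\cX^i=\cX^0_*$ and $\cX^{i+1}=\cX^0_*$ as well. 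By induction all later iterates, including $\cX^k$, equal $\cX^0_*$ and solve the system. This contradicts $\cA\ast\cX^k\neq\cB$.
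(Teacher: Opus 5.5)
Your proof is correct. For the three-way equivalence you follow the paper's route. The paper also closes the cycle with the successive Pythagorean identity $\|\cX^k-\cX_*^0\|_F^2=\|\cP_{\pi_k}(\cX^k)-\cX_*^0\|_F^2+\|r_{\pi_k}(\cX^k)\|_F^2$. It cites Rieger for this identity instead of deriving it, and uses $\cX_*^0$ where you use an arbitrary solution $\cX_\star$. Your explicit step from vanishing increments to membership in every row-block affine set fills in what the paper leaves implicit.

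The ``moreover'' part is where you differ. The paper argues one step back by contradiction. If $\cA\ast\cX^{k-1}=\cB$, then $\cP_{\pi_{k-1}}(\cX^{k-1})=\cX^{k-1}$, so $\Pi_{k-1}=\operatorname{aff}\{\cX^{j_{k-1,\tau}},\ldots,\cX^{k-1}\}\subseteq\Pi_{k-2}$. Since $\cX^{k-1}$ is already the best approximation to $\cX_*^0$ on the larger set $\Pi_{k-2}$, it is also the best approximation on $\Pi_{k-1}$, hence $\cX^k=\cX^{k-1}$. The case $k=1$ is handled separately via $\Pi_0=\{\cX^0\}$. This uses only the nested best-approximation structure and the fixed-point equivalence. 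It needs neither invariance of $\cX^0+\operatorname{range}_K(\cA^\top)$ nor uniqueness of solutions in it.

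Your route uses that invariance to show that a solving iterate must equal $\cX_*^0$, after which the sequence trivially stays there. This treats all $k$ uniformly. It also gives, in passing, the first claim of Theorem~\ref{TKGK_convergence_thm}, which the paper proves later by the same range argument. However, it relies on two facts you assert without proof, and you should spell them out:
\begin{itemize}
\item each projection step adds an element of $\operatorname{range}_K(\cA_{i::}^\dagger)=\operatorname{range}_K(\cA_{i::}^\top)\subseteq\operatorname{range}_K(\cA^\top)$;
\item $\cI-\cA^\dagger\ast\cA$ annihilates $\operatorname{range}_K(\cA^\top)$, so a solution $\cX$ with $\cX-\cX^0\in\operatorname{range}_K(\cA^\top)$ equals $\cA^\dagger\ast\cB+(\cI-\cA^\dagger\ast\cA)\ast\cX^0=\cX_*^0$.
\end{itemize}
Finally, in your route the appeal to $\cP_{\pi_i}(\cX^i)=\cX^i$ is superfluous, since $\cX^i\in\Pi_i$ holds by definition.
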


\begin{lemma} \label{ensure-comput}
	For any \(k \ge 0\), the quantity \(\gamma_k\) in \eqref{eq-s_k} can be computed as
	\begin{align}\label{gammak}
		\gamma_k=\frac{1}{2}\left\|r_{\pi_{k}}\left(\cX^k\right)\right\|_{F}^2+\frac{1}{2}\left\|\cP_{\pi_{k}}\left(\cX^k\right) - \cX^k\right\|_{F}^2,
	\end{align}
	where $\mathcal{P}_{\pi_k}$ and $r_{\pi_{k}}$  are defined in \eqref{full-projection-oper} and \eqref{def_r_k}, respectively.
	Furthermore, if \(\cA \ast \cX^k \ne \cB\), then  \(\gamma_k > 0\), and the matrix \(M_k\) has full column rank. Consequently, the linear system \eqref{eq-s_k} admits a unique solution.
\end{lemma}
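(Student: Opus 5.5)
The formula for $\gamma_k$ will come from a polarization identity combined with a Pythagorean telescoping along the inner sweep. Positivity of $\gamma_k$ will then follow from Lemma~\ref{equiv}, and the full column rank of $M_k$ from an induction on $k$ that exploits the orthogonality conditions defining each iterate.

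\textbf{The formula.} Fix $k$ and set $\cY_0:=\cX^k$ and $\cY_i:=\cP_{\pi_{k,i}}(\cY_{i-1})$, so that $\cY_m=\cP_{\pi_k}(\cX^k)$. Put $\mathcal{D}_i:=\cY_{i-1}-\cY_i=\cA_{\pi_{k,i}::}^\dagger\ast(\cA_{\pi_{k,i}::}\ast\cY_{i-1}-\cB_{\pi_{k,i}::})$. Then $\uf(\mathcal{D}_i)$ is exactly the $i$-th block of $r_{\pi_k}(\cX^k)$, and since $\uf$ preserves Frobenius norms we get $\|r_{\pi_k}(\cX^k)\|_F^2=\sum_i\|\mathcal{D}_i\|_F^2$.
\begin{itemize}
\item Each $\cP_{\pi_{k,i}}$ is the orthogonal projection onto an affine set containing $\cX^0_*$. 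Hence $\langle \mathcal{D}_i,\cY_i-\cX^0_*\rangle=0$, which gives $\|\cY_{i-1}-\cX^0_*\|_F^2=\|\cY_i-\cX^0_*\|_F^2+\|\mathcal{D}_i\|_F^2$.
\item Telescoping yields $\|\cX^k-\cX^0_*\|_F^2-\|\cY_m-\cX^0_*\|_F^2=\|r_{\pi_k}(\cX^k)\|_F^2$.
\item Apply $2\langle a,b\rangle=\|a\|^2+\|b\|^2-\|a-b\|^2$ with $a=\cY_m-\cX^k$ and $b=\cX^0_*-\cX^k$, noting $a-b=\cY_m-\cX^0_*$. This gives $2\gamma_k=\|\cP_{\pi_k}(\cX^k)-\cX^k\|_F^2+\|\cX^k-\cX^0_*\|_F^2-\|\cY_m-\cX^0_*\|_F^2$, which is \eqref{gammak}.
\end{itemize}

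\textbf{Positivity of $\gamma_k$.} If $\cA\ast\cX^k\neq\cB$, Lemma~\ref{equiv} gives $r_{\pi_k}(\cX^k)\neq0$. Since both terms in \eqref{gammak} are nonnegative, $\gamma_k>0$.

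\textbf{Full column rank.} Suppose $M_k s=0$ and take the inner product with $\vec{x}^k-\vec{x}^0_*$. The first $k-j_{k,\tau}$ columns are orthogonal to $\vec{x}^k-\vec{x}^0_*$ (the orthogonality noted before \eqref{def_gammak}), and the last column contributes $-\gamma_k$. So $-\gamma_k s_{\rm last}=0$, hence $s_{\rm last}=0$. It then suffices to show that $\cX^{j_{k,\tau}},\dots,\cX^k$ are affinely independent. We prove this by induction on $k$, using Lemma~\ref{equiv} to know that no earlier iterate solves the system either.
\begin{itemize}
\item \emph{Base case.} For $k=0$ there is a single point, and $M_0=[\vec d^0]$ with $\vec d^0\neq0$ by Lemma~\ref{equiv}.
\item \emph{Inductive step.} By induction, full rank of $M_{k-1}$ gives affine independence of $\cX^{j_{k-1,\tau}},\dots,\cX^{k-1}$. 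Since $j_{k,\tau}\ge j_{k-1,\tau}$, the same holds for $\cX^{j_{k,\tau}},\dots,\cX^{k-1}$. We must show $\cX^k\notin S:=\operatorname{aff}\{\cX^{j_{k-1,\tau}},\dots,\cX^{k-1}\}$.
\end{itemize}

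This last exclusion is the main obstacle, and I would handle it by contradiction using the two minimization properties. Because $j_{k-2,\tau}\le j_{k-1,\tau}$, we have $S\subseteq\Pi_{k-2}$ (for $k=1$, $S=\{\cX^0\}$ and the same argument works trivially). Since $\cX^{k-1}$ is the closest point of $\Pi_{k-2}$ to $\cX^0_*$, any $\cX^k\in S$ would satisfy $\|\cX^k-\cX^0_*\|_F\ge\|\cX^{k-1}-\cX^0_*\|_F$. On the other hand, $\Pi_{k-1}$ contains the line $\cX^{k-1}+t(\cP_{\pi_{k-1}}(\cX^{k-1})-\cX^{k-1})$. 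The derivative of $t\mapsto\|\cX^{k-1}+t(\cP_{\pi_{k-1}}(\cX^{k-1})-\cX^{k-1})-\cX^0_*\|_F^2$ at $t=0$ is $-2\gamma_{k-1}<0$. Hence the minimizer $\cX^k$ over $\Pi_{k-1}$ is strictly closer to $\cX^0_*$ than $\cX^{k-1}$, a contradiction.

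Therefore $M_k$ has full column rank, $M_k^\top M_k$ is nonsingular, and \eqref{eq-s_k} has a unique solution.
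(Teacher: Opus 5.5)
Your proposal is correct, and its skeleton matches the paper's proof. The formula for $\gamma_k$ comes from polarization plus the telescoped Pythagorean identity \eqref{successive_Pythagoras}; the paper imports that identity from Rieger's Lemma~1, while you derive it directly. The full-rank claim is proved by induction on $k$: you pair a dependence relation with $\vec{x}^k-\vec{x}^0_*$ so that $\gamma_k>0$ kills the coefficient of $\vec d^k$, and then reduce to affine independence of $\cX^{j_{k,\tau}},\dots,\cX^k$.

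The genuine difference is in how that affine independence is obtained.

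\textbf{Paper's route (algebraic).} The paper uses the explicit update $\vec x^{k+1}=\vec x^k+M_k s^k$. It notes that $s^k_{k-j_{k,\tau}+1}\neq 0$, read off from the closed form of $\underline{s^k}$ in \eqref{vec_update}. Since also $\vec d^k\notin\operatorname{Range}(V_k)$, the new iterate leaves the affine hull of the window, so $\vec x^{j_{k,\tau}},\dots,\vec x^{k+1}$ are affinely independent.

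\textbf{Your route (variational).} $\cX^{k-1}$ already minimizes $\|\cdot-\cX^0_*\|_F$ over $\Pi_{k-2}\supseteq S$. The directional derivative along $\cP_{\pi_{k-1}}(\cX^{k-1})-\cX^{k-1}$ equals $-2\gamma_{k-1}<0$, so $\cX^k$ is strictly closer to $\cX^0_*$ and therefore cannot lie in $S$.

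\textbf{Comparison.} Your argument uses only the definition of the iterates as projections. It avoids the forward reference to the Section~4 formula, which the paper can use without circularity only because $M_k$'s full rank is already in its induction hypothesis. It also yields strict decrease of $\|\cX^k-\cX^0_*\|_F$ as a byproduct. The paper's version stays closer to the linear-algebraic structure of \eqref{eq-s_k} that is exploited later in the Gram--Schmidt implementation.

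The two key facts are really two faces of one identity. Given $\gamma_{k-1}>0$, the relation
$\|\cX^{k}-\cX^0_*\|_F^2=\|\cX^{k-1}-\cX^0_*\|_F^2-\gamma_{k-1}\,s^{k-1}_{k-1-j_{k-1,\tau}+1}$
(from the proof of Theorem~\ref{TKGK_convergence_thm}) shows that a nonzero last coefficient is exactly strict descent.
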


We present our TK with generalized Gearhart-Koshy acceleration in Algorithm~\ref{TKGK}. 
In particular, when the truncation parameter $\tau = 1$, Algorithm~\ref{TKGK} reduces to the original Gearhart-Koshy acceleration \cite{tam2021gearhart}.

\begin{algorithm}[htpb]
	\caption{Tensor Kaczmarz with generalized Gearhart-Koshy acceleration (TKGK) \label{TKGK} }
	\begin{algorithmic}
		\Require
		$\cA \in \mathbb{R}^{m\times \ell \times n}$, $\cB \in \mathbb{R}^{m\times p\times n}$, positive integer $\tau \geq 1$, $k=0$, an initial $\cX^0\in \mathbb{R}^{\ell\times p\times n}$, and a permutation strategy $\Pi\in \{ \text{IS}, \text{RR}, \text{SO}\}$ as defined in Strategy \ref{strat:perm-strategies}.
		\begin{enumerate}
			\item[1:] Generate \(\pi_k = (\pi_{k,1}, \ldots, \pi_{k,m})\) according to strategy $\Pi$. 
			\item[2:] Compute $\cP_{\pi_k}(\cX^k)$ and  $\delta_{k} = \left\|\cP_{\pi_k}\left(\cX^k\right) - \cX^k\right\|_{F}^2$.
			\item[3:] If $\delta_{k} = 0$, stop and go to output.
			\item[4:] Compute $\rho_{k} = \left\| r_{\pi_{k}}\left(\cX^k\right)\right\|_{F}^2$ and $\gamma_{k} = \frac{1}{2} \left(\rho_{k}+\delta_{k}\right)$.
			\item[5:] Update  $j_{k,\tau}=\max\{k-\tau+1,0\}$ and compute $M_{k}^{\top}M_{k}$.
			\item[6:] Find \( s_k \) as the solution to \eqref{eq-s_k}, i.e.,
			$$	M_k^\top M_ks^k=\gamma_ke_{k-j_{k,\tau}+1}^{k-j_{k,\tau}+1}.$$   
			\item[7:] Update $$\cX^{k+1}=\cX^k+\sum_{i=1}^{k-j_{k,\tau}}s_{i}^{k}\left(\cX^{j_{k,\tau}+i-1}-\cX^{k}\right)+s_{k-j_{k,\tau}+1}^{k}\left(\cP_{\pi_{k}}\left(\cX^{k}\right)-\cX^{k}\right).$$ 
			\item[8:] If the stopping rule is satisfied, stop and go to output. Otherwise, set $k=k+1$ and return to Step $1$.
		\end{enumerate}
		
		\Ensure
		The approximate solution.
	\end{algorithmic}
\end{algorithm}

\begin{remark}
We note that $\delta_{k} = 0$, i.e., $\cP_{\pi_k}(\cX^k) - \cX^k = 0$, can serve as a valid stopping criterion that guarantees the algorithm terminates exactly at a solution. Indeed, by Lemma~\ref{equiv}, if $\cP_{\pi_k}(\cX^k) - \cX^k = 0$, then $\cA \ast \cX^k = \cB$.
\end{remark}


\begin{remark}
	\label{complexity_TKGK}
	Let us consider the computational cost of Step 6 of Algorithm~\ref{TKGK}, which requires solving a linear system. 
	Indeed, without exploiting any potential iterative structure of \(M_k\), this step involves two main procedures  forming the Gram matrix \(M_k^\top M_k\) and solving the resulting system.
	Note that while \(M_k\) is defined via vectorization in \eqref{dfmk}, the Gram matrix \(M_k^\top M_k\) can be computed more directly using tensor inner products
	\begin{align}\label{Gram_matrix}
		M_k^\top M_k =
		\begin{bmatrix}
			\langle \cX^{j_{k,\tau}}-\cX^k , \cX^{j_{k,\tau}}-\cX^k \rangle & \cdots &
			\langle \cX^{j_{k,\tau}}-\cX^k , P_{\pi_k}(\cX^k)-\cX^k \rangle\\
			\vdots & & \vdots\\
			\langle P_{\pi_k}(\cX^k)-\cX^k , \cX^{j_{k,\tau}}-\cX^k \rangle & \cdots &
			\langle P_{\pi_k}(\cX^k)-\cX^k , P_{\pi_k}(\cX^k)-\cX^k \rangle
		\end{bmatrix}.
	\end{align}
	Constructing this matrix requires  \(\mathcal{O}(\tau^2 \ell p n)\) floating-point operations. Subsequently, solving the normal equations, for example via Cholesky factorization, needs  \(\mathcal{O}(\tau^3)\) cost.  Hence, given that the truncation parameter \(\tau\) is typically much smaller than \(\ell p n\), the overall computational cost per iteration is dominated by \(\mathcal{O}(\tau^2 \ell p n)\).
	
	In fact, by exploiting the tridiagonal structure of the inverse of the $M_k^{\top}M_k$ submatrix and using the Schur complement technique, a tridiagonal-based implementation was proposed in \cite[Section 6]{rieger2023generalized}. 
	This approach reduces the computational cost to \(\mathcal{O}(\tau \ell p n)\), which is linear in the problem dimension. In this paper, we present an efficient implementation strategy based on Gram--Schmidt orthogonalization. Our method not only achieves the same \(\mathcal{O}(\tau \ell p n)\) computational cost but also avoids the tensor extraction or concatenation operations required by the tridiagonal-based implementation approach in \cite[Section 6]{rieger2023generalized}; see Section~\ref{section4-1}.
\end{remark}

\subsection{Convergence analysis}
Let us introduce some auxiliary variables.
Define
\begin{align}\label{def_vk}
	V_0 := \emptyset, \quad 
	V_k := \begin{bmatrix} \vec{x}^{j_{k,\tau}} - \vec{x}^k, \ldots, \vec{x}^{k-1} - \vec{x}^k \end{bmatrix} \in \mathbb{R}^{\ell p n \times (k - j_{k,\tau})}, \quad \text{for } k \ge 1,
\end{align}
where \(\vec{x}^i = \tv(\cX^i)\). We next define two scalar quantities that will be used in the convergence analysis
\begin{equation} \label{rate-betak}
	\beta_k := \left(1 - \frac{\left\| V_k V_k^{\dagger} \vec{d}^k \right\|_2^2}{\left\| \vec{d}^k \right\|_2^2} \right)^{-1}, \quad
	\zeta_k := \frac{ \left\langle \cX^k - \cX_{*}^{0},\, \cX^k - \cP_{\pi_k}(\cX^k) \right\rangle }{ \left\| \cX^k - \cX_{*}^{0} \right\|_F \left\| \cP_{\pi_k}(\cX^k) - \cX^k \right\|_F },
\end{equation}
where \(\vec{d}^k = \tv(\mathcal{P}_{\pi_k}(\cX^k) - \cX^k)\). 
By Lemma~\ref{equiv}, if \(\cX^k\) is not a solution to the tensor linear system \(\cA \ast \cX = \cB\), 
then \(\vec{d}^k \ne 0\), \(\cP_{\pi_k}(\cX^k) \ne \cX^k\), and \(\cX^k \ne \cX_{*}^{0}\). 
As a result, both \(\beta_k\) and \(\zeta_k\) are well-defined.

We now state the convergence result of the TKGK method.

\begin{theorem}
	\label{TKGK_convergence_thm}
	Suppose that the tensor linear system $\cA \ast \cX = \cB$ is consistent, and that $\cX^0 \in \mathbb{R}^{\ell \times p\times n}$ is an arbitrary initial tensor.
	Define $\cX_{*}^{0} = \cA^{\dagger} \ast \cB + \left(\cI-\cA^{\dagger}\ast \cA\right) \ast \cX^{0}$, and let $\{ \cX^k \}_{k\geq0}$ be the sequence generated by Algorithm \ref{TKGK}. 
	If $\cA\ast \cX^k=\cB$, then $\cX^k=\cX_{*}^{0}$. Otherwise,
	\[\left\|\cX^{k+1} - \cX_{*}^{0}\right\|_{F}^{2} \leq \left(1 - \beta_{k}\zeta_{k}^{2}\right) \left\|\cX^{k}-\cX_{*}^{0}\right\|_{F}^2, \]
	where $\beta_{k}$ and $\zeta_{k}$ are defined as \eqref{rate-betak}. 
	The convergence factor is better than that of the TK method in Algorithm \ref{tensor-k}, i.e.,
	\begin{align*}
		1 - \beta_{k} \zeta_{k}^{2} \leq \rho_{\pi_k}.
	\end{align*}
	Moreover, this inequality is strict if $\| r_{\pi_{k}}(\mathcal{X}^{k}) \|_{F}^{2} \neq \| \mathcal{X}^{k} - \mathcal{P}_{\pi_{k}}(\mathcal{X}^{k}) \|_{F}^{2}$. 
\end{theorem}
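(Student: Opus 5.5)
The plan has three parts: an invariance argument for the first claim, an exact formula for the error after the affine search, and an AM--GM comparison with the TK factor.

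\textbf{Step 1: invariance and the first claim.} I will show by induction that $\cX^k-\cX^0_*\in\operatorname{range}_K(\cA^\top)$ for all $k$.
\begin{itemize}
\item Each projector $\cP_i$ adds a term $\cA_{i::}^\dagger\ast(\cdot)$. Its $K$-range lies in $\operatorname{range}_K(\cA_{i::}^\top)\subseteq\operatorname{range}_K(\cA^\top)$, by \eqref{range_Adagger_eq_range_Atop}. Hence $\cP_{\pi_k}(\cX^k)-\cX^0_*$ stays in this subspace.
\item $\cX^{k+1}$ is an affine combination of points whose differences from $\cX^0_*$ all lie in $\operatorname{range}_K(\cA^\top)$, so the property is preserved.
\item If $\cA\ast\cX^k=\cB$, then $\cX^k-\cX^0_*$ lies in both the null space of $\cA$ and $\operatorname{range}_K(\cA^\top)$. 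It is therefore zero, i.e. $\cX^k=\cX^0_*$.
\end{itemize}

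\textbf{Step 2: exact error after the affine search.} Assume $\cA\ast\cX^k\neq\cB$ and write $e=\vec x^k-\vec x^0_*$ and $d=\vec d^k$.
\begin{itemize}
\item For $k\ge1$, $\cX^k$ is the projection of $\cX^0_*$ onto $\Pi_{k-1}$. Since $j_{k,\tau}\ge j_{k-1,\tau}$, every $\cX^i$ with $j_{k,\tau}\le i\le k$ lies in $\Pi_{k-1}$. The columns of $V_k$ are therefore directions of $\Pi_{k-1}$, which gives $V_k^\top e=0$.
\item $\Pi_k=\cX^k+\operatorname{span}\{\operatorname{range}V_k,\,d\}$. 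Let $d_\perp=(I-V_kV_k^\dagger)d$. By the full column rank part of Lemma~\ref{ensure-comput}, $d_\perp\neq0$.
\item Split the projection of $e$ onto $\operatorname{span}\{V_k,d\}$ into its $\operatorname{range}V_k$ part, which is $0$, and its $d_\perp$ part. This gives
\[
\|\cX^{k+1}-\cX^0_*\|_F^2=\|e\|^2-\frac{\langle e,d_\perp\rangle^2}{\|d_\perp\|^2}.
\]
\item Since $e\perp\operatorname{range}V_k$, we have $\langle e,d_\perp\rangle=\langle e,d\rangle$. Also $\|d_\perp\|^2=\|d\|^2-\|V_kV_k^\dagger d\|^2=\|d\|^2/\beta_k$.
\end{itemize}
Substituting yields exactly $(1-\beta_k\zeta_k^2)\|e\|^2$, so the first inequality holds with equality.

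\textbf{Step 3: comparison with TK.} Since $\beta_k\ge1$, it suffices to show $\zeta_k^2\ge1-\rho_{\pi_k}$.
\begin{itemize}
\item Write $\mathcal Y=\cP_{\pi_k}(\cX^k)-\cX^0_*$. Using Step 1, $\mathcal Y=\cT_{\pi_k}\ast\cA^\dagger\ast\cA\ast(\cX^k-\cX^0_*)$, so $\|\mathcal Y\|_F^2\le\rho_{\pi_k}\|e\|^2$.
\item Each single projection is orthogonal and $\cX^0_*$ lies in every hyperplane. Pythagoras, summed over the $m$ inner steps, gives $\|\mathcal Y\|_F^2=\|e\|^2-\|r_{\pi_k}(\cX^k)\|_F^2$. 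Hence $\|r_{\pi_k}(\cX^k)\|_F^2\ge(1-\rho_{\pi_k})\|e\|^2$.
\item By \eqref{def_gammak}, $\langle e,-d\rangle=\gamma_k$, so $\zeta_k^2=\gamma_k^2/(\|e\|^2\|d\|^2)$.
\item By Lemma~\ref{ensure-comput}, $\gamma_k=\tfrac12(a+b)$ with $a=\|r_{\pi_k}(\cX^k)\|_F^2$ and $b=\|d\|^2$. AM--GM gives $\gamma_k^2\ge ab$, with equality iff $a=b$.
\end{itemize}
Therefore
\[
\zeta_k^2\ge\frac{a}{\|e\|^2}\ge1-\rho_{\pi_k},
\]
and the first inequality is strict when $a\neq b$. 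This gives the strict improvement under the stated condition.

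The main obstacle is the orthogonality $V_k^\top e=0$ in Step 2. It needs a careful check that the window of stored iterates used at step $k$ sits inside the previous affine set $\Pi_{k-1}$, including the case $k=0$, where $V_0=\emptyset$ and $\beta_0=1$.
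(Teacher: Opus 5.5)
Your proposal is correct and follows essentially the same route as the paper. The shared steps are the invariance $\cX^k-\cX^0_*\in\operatorname{range}_K(\cA^\top)$ for the first claim, the exact identity $\|\cX^{k+1}-\cX^0_*\|_F^2=(1-\beta_k\zeta_k^2)\|\cX^k-\cX^0_*\|_F^2$, and the comparison with $\rho_{\pi_k}$ via \eqref{successive_Pythagoras}. The only cosmetic differences are that you get the identity by splitting the search space orthogonally into $\operatorname{Range}(V_k)$ and $(I-V_kV_k^\dagger)\vec d^k$ rather than from the normal equations and \eqref{oversk_undersk}, and that your AM--GM step $\gamma_k^2\ge ab$ is exactly Lemma~\ref{gamma_square} rewritten, since $\gamma_k^2-ab=\tfrac14(a-b)^2$; the strictness argument is therefore the same.
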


The following remark shows that, with a proper initial tensor, TKGK converges linearly to the unique least-norm solution $\cA^{\dagger} \ast \cB$.
\begin{remark}\label{remark-0405}
	Let $P_m$ denote the set of all permutations of $[m]$, and define
	\(
	\rho := \max_{\pi\in P_m} \rho_{\pi}.
	\)
	By Theorem~\ref{TKGK_convergence_thm}, the iteration sequence $\{ \cX^k \}_{k\geq0}$ satisfies
	\[
	\left\|\cX^{k} - \cX_{*}^{0}\right\|_{F}^{2} \leq \rho^k \left\|\cX^{0}-\cX_{*}^{0}\right\|_{F}^2.
	\]
	Since $P_m$ is finite and $\rho_{\pi}<1$ for all $\pi\in P_m$, it follows that $\rho<1$. Furthermore, if the initial tensor satisfies $\cX^0 \in \operatorname{range}_K(\cA^\top)$ (e.g., $\cX^0=0$), then $\cX_*^0 = \cA^{\dagger} \ast \cB$. This implies that the iteration sequence $\{\cX^k\}_{k\ge0}$ generated by Algorithm~\ref{TKGK} converges to the  unique least-norm solution $\cA^{\dagger} \ast \cB$. 
\end{remark}

The following remark illustrates that incorporating more previous iterates can improve the convergence upper bound of Algorithm~\ref{TKGK}.
\begin{remark}\label{rmk_more_history_tighter_bound}
	Consider the case where a smaller truncation parameter \(\tilde{\tau} < \tau\) is used. Define
	\[
	\tilde{V}_k := 
	\begin{bmatrix}
		\vec{x}^{\tilde{j}_{k,\tilde{\tau}}} - \vec{x}^k, \ldots, \vec{x}^{k-1} - \vec{x}^k
	\end{bmatrix},
	\quad \text{where} \quad \tilde{j}_{k,\tilde{\tau}} := \max\{k - \tilde{\tau} + 1, 0\}.
	\]
	Since \(\tilde{\tau} < \tau\), it holds that \(\operatorname{Range}(\tilde{V}_k) \subseteq \operatorname{Range}(V_k)\), which implies
	\[
	\frac{\lVert \tilde{V}_k \tilde{V}_k^\dagger \vec{d}^k \rVert_2^2}{\lVert \vec{d}^k \rVert_2^2} 
	\leq 
	\frac{\lVert V_k V_k^\dagger \vec{d}^k \rVert_2^2}{\lVert \vec{d}^k \rVert_2^2}.
	\]
	Accordingly, the modified convergence rate quantity
	\[
	\tilde{\beta}_k := \left(1 - \frac{\lVert \tilde{V}_k \tilde{V}_k^\dagger \vec{d}^k \rVert_2^2}{\lVert \vec{d}^k \rVert_2^2} \right)^{-1}
	\]
	satisfies \(\tilde{\beta}_k \leq \beta_k\). Moreover, from the definition of \(\zeta_k\) in \eqref{rate-betak}, it is independent of the value of \(\tau\) or \(\tilde{\tau}\). Therefore,
	\[
	1 - \tilde{\beta}_k \zeta_k^2 \geq 1 - \beta_k \zeta_k^2.
	\]
	This confirms that using a larger number of previous iterates (i.e., a larger \(\tau\)) can yield a tighter convergence upper bound for Algorithm~\ref{TKGK}.
\end{remark}

\section{An efficient Gram-Schmidt-based implementation}\label{Sec_efficient_GS_implementation}

This section aims to develop a novel Gram-Schmidt-based implementation to efficiently solve the linear system \eqref{eq-s_k}, which must be solved at every iteration of Algorithm~\ref{TKGK} in Step~6. In particular, we show that the next iterate $\cX^{k+1}$ can be obtained in linear time by exploiting the  structure of the problem. Moreover, using this implementation, Algorithm~\ref{G-S-based TKGK} can recover the Arnoldi-type Krylov subspace method \cite[Section~10.5.1]{golub2013matrix}.

For convenience, we  adopt a special decomposition for vector
\[
x = \begin{pmatrix} \overline{x} \\ \underline{x} \end{pmatrix}\in\mathbb{R}^m, \quad \text{where } \overline{x} := (x_1,\ldots,x_{m-1})^\top \in \mathbb{R}^{m-1} \text{ and } \underline{x} := x_m \in \mathbb{R}.
\]
We use $\mathbf{0}_{m}$ to denote the zero vector in $\mathbb{R}^{m}$. Recall that \(\vec{x}^i = \tv(\cX^i)\) and  \(\vec{d}^k = \tv(\mathcal{P}_{\pi_k}(\cX^k) - \cX^k)\) denote the tube vectorization of \(\cX^i\) and $\mathcal{P}_{\pi_k}(\cX^k) - \cX^k$, respectively. From the definition of $V_k$ in \eqref{def_vk}, we have $M_k = [V_k, \vec{d}^k]$. Hence, the linear system \eqref{eq-s_k} can be equivalently rewritten as
\begin{align}\label{normal_equation_block}
	\begin{bmatrix}
		V_k^{\top}V_k & V_k^{\top}\vec{d}^k \\
		(\vec{d}^k)^{\top}V_k & \|\vec{d}^k\|_{2}^{2}
	\end{bmatrix}
	\begin{pmatrix}
		\overline{s^k} \\
		\underline{s^k}
	\end{pmatrix} = 
	\begin{pmatrix}
		\mathbf{0}_{k-j_{k,\tau}} \\
		\gamma_k
	\end{pmatrix}.
\end{align}
From which we obtain 
\begin{equation}\label{oversk_undersk}
	\overline{s^k} = - \left(V_k^{\top}V_k\right)^{-1}V_k^{\top} \vec{d}^k \underline{s^k} \ \ \text{and} \ \ \underline{s^k}
	=
	\frac{\gamma_k}
	{\langle \vec d^k, (I - V_k (V_k^\top V_k)^{-1} V_k^\top)\vec d^k \rangle}. 
\end{equation}
Here, $V_k^{\top}V_k$ is invertible because $M_k$ has full column rank by Lemma~\ref{ensure-comput}, which implies that the submatrix $V_k$ also has full column rank.
Then, from Step $7$ of Algorithm~\ref{TKGK}, we have
\[
\begin{aligned}
	\vec{x}^{k+1} 
	& = \vec{x}^{k} + s_{1}^k\left(\vec{x}^{j_{k,\tau}} - \vec{x}^{k}\right) + \cdots + s_{k-j_{k,\tau}}^k\left(\vec{x}^{k-1} - \vec{x}^{k}\right) + s_{k-j_{k,\tau}+1}^k\vec{d}^k \\
	& = \vec{x}^{k} + V_k\overline{s^k} + \underline{s^k}\vec{d}^k\\
	& = \vec{x}^{k} + \underline{s^k}(I -V_k (V_k^\top V_k)^{-1} V_k^\top)\vec d^k,
\end{aligned}
\]
where the third equality follows from \eqref{oversk_undersk}. 
Note that $V_k V_k^\dagger = V_k (V_k^\top V_k)^{-1} V_k^\top$, we can get
\begin{equation}\label{vec_update}
	\vec x^{k+1} = \vec x^k + \underline{s^k}(I - V_k V_k^\dagger)\vec d^k \ \ \text{and} \ \ \underline{s^k}=\frac{\gamma_k}{\langle\vec{d}^k, (I - V_kV_k^{\dagger})\vec d^k\rangle}.
\end{equation}

Suppose that $U_k = [\vec{u}_{j_{k,\tau}},\ldots,\vec{u}_{k-1}]$ is an orthogonal basis of $\operatorname{Range}(V_k)$, then we have
\[
\left(I - V_k V_k^\dagger\right)\vec d^k =\left( I - U_k(U_k^{\top}U_k)^{-1}U_k^{\top}\right) \vec d^k = \vec d^k
-
\sum_{i=j_{k,\tau}}^{k-1}
\frac{\langle \vec u_i,\vec d^k\rangle}{\|\vec u_i\|_2^2}\vec u_i
\]
and
\begin{align}\label{idempotent_I-V_kV_kdagger}
	\langle\vec{d}^k, (I - V_kV_k^{\dagger})\vec d^k\rangle = \langle (I - V_kV_k^{\dagger})\vec{d}^k, (I - V_kV_k^{\dagger})\vec d^k\rangle = \left\|(I - V_kV_k^{\dagger})\vec d^k\right\|^2_2.
\end{align}
Let $\vec{u}_k := (I - V_kV_k^{\dagger})\vec d^k$. Then \eqref{vec_update} can be rewritten equivalently in vector and tensor forms as
\begin{equation}\label{xie-eq-0323}
	\left\{
	\begin{aligned}
		\vec{u}_k &= \vec d^k - \sum_{i=j_{k,\tau}}^{k-1}\frac{\langle \vec u_i,\vec d^k\rangle}{\|\vec u_i\|_2^2}\vec u_i,\\
		\vec x^{k+1} &= \vec x^k + \frac{\gamma_{k}}{\|\vec{u}_k\|^2_2}\vec{u}_k,
	\end{aligned}
	\right.
	\iff
	\left\{
	\begin{aligned}
		\cU_k &= \cD_k - \sum_{i=j_{k,\tau}}^{k-1} \frac{\langle \cU_i,\cD_k\rangle}{\|\cU_i\|_F^2}\cU_i,\\
		\cX^{k+1} &= \cX^k + \frac{\gamma_{k}}{\|\cU_k\|^2_F} \cU_k,
	\end{aligned}
	\right.
\end{equation}
where $\gamma_k$ is defined in \eqref{gammak}. The following proposition shows that \eqref{xie-eq-0323} is exactly a Gram-Schmidt orthogonalization procedure, with the initial $\cU_0 = \cD_0 = \cP_{\pi_0}(\cX^0) - \cX^0$, which generates an orthogonal basis of 
\(
\spn\{\cX^{j_{k,\tau}}-\cX^k,\ldots,\cX^{k-1}-\cX^k\}.
\)

\begin{prop}\label{basis_update_lemma}
	For any $k \ge 0$, let $\cD_k = \mathcal{P}_{\pi_k}(\cX^k) - \cX^k$ and $j_{k,\tau} = \max\{k - \tau + 1, 0\}$. Let $\{\cX^k\}_{k\ge0}$ and $\{\cU_k\}_{k\ge0}$ be the sequences generated by the tensor formulation in \eqref{xie-eq-0323}. Then $\langle\cU_i, \cU_j\rangle = 0$ for all $i \neq j$ with $j_{k,\tau} \le i, j \le k$, and
	\[
	\spn\{\cU_{j_{k,\tau}}, \ldots, \cU_{k-1}\}
	=
	\spn\{\cX^{j_{k,\tau}} - \cX^k, \ldots, \cX^{k-1} - \cX^k\}.
	\]
\end{prop}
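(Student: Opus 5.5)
We argue by induction on $k$ and prove both claims together. To make the recursion go through, we strengthen the span statement. Since each $\cX^{i+1}-\cX^i$ is a nonzero multiple of $\cU_i$, it is convenient to show that, as long as $\cA\ast\cX^k\neq\cB$, all coefficients $\gamma_i/\|\cU_i\|_F^2$ are nonzero. By Lemma~\ref{equiv} all earlier iterates are non-solutions, and by Lemma~\ref{ensure-comput} each $\gamma_i>0$ and each $M_i$ has full column rank. Full column rank of $M_i=[V_i,\vec d^i]$ gives $\vec u_i=(I-V_iV_i^\dagger)\vec d^i\neq0$, so $\|\cU_i\|_F>0$.

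The span identity then reduces to a telescoping argument. For $j_{k,\tau}\le i\le k-1$,
\[
\cX^{i}-\cX^k=-\sum_{t=i}^{k-1}(\cX^{t+1}-\cX^t)=-\sum_{t=i}^{k-1}\frac{\gamma_t}{\|\cU_t\|_F^2}\,\cU_t .
\]
So the family $\{\cX^i-\cX^k\}_{i=j_{k,\tau}}^{k-1}$ is obtained from $\{\cU_t\}_{t=j_{k,\tau}}^{k-1}$ by an upper triangular matrix whose diagonal entries $-\gamma_t/\|\cU_t\|_F^2$ are all nonzero. This matrix is invertible, so the two spans coincide. This part needs no induction hypothesis beyond nonvanishing of the coefficients, and it holds for every $k$, including the empty case $k=0$.

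Orthogonality is proved by induction on $k$. We show that for each $k$, $\cU_k$ is orthogonal to $\cU_i$ for $j_{k,\tau}\le i\le k-1$. Combined with the same statement at earlier indices this gives pairwise orthogonality within the window $\{j_{k,\tau},\dots,k\}$. The point is that for $i<i'$ in this window we have $j_{i',\tau}\le j_{k,\tau}\le i$, so the pair $(i,i')$ is already covered by the statement at step $i'$.

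For the step at $k$, assume that $\{\cU_{j_{k,\tau}},\dots,\cU_{k-1}\}$ is pairwise orthogonal, which follows from the earlier steps by the window-monotonicity remark just made. By the span identity, $\{\cU_i\}_{i=j_{k,\tau}}^{k-1}$ is then an orthogonal basis of $\operatorname{Range}(V_k)$. Hence the projection $V_kV_k^\dagger\vec d^k$ equals $\sum_i \frac{\langle \vec u_i,\vec d^k\rangle}{\|\vec u_i\|^2}\vec u_i$. It follows that $\vec u_k=(I-V_kV_k^\dagger)\vec d^k$ is orthogonal to $\operatorname{Range}(V_k)\ni \vec u_i$; equivalently, one computes $\langle\cU_k,\cU_i\rangle=\langle\cD_k,\cU_i\rangle-\langle\cU_i,\cD_k\rangle=0$ directly from the orthogonality of the $\cU_i$.

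The main obstacle is ordering this bootstrapping carefully. The formula in \eqref{xie-eq-0323} equals the true projection only once the $\cU_i$ are known to be orthogonal, and orthogonality of the new $\cU_k$ in turn uses that formula. The way through is the window inclusion $j_{i',\tau}\le j_{k,\tau}$ for $i'\le k$: it lets us prove orthogonality of $\cU_k$ from the Gram--Schmidt formula and the previously established orthogonality alone. The span identity, which relies only on the telescoping argument and the nonvanishing of the coefficients from Lemma~\ref{ensure-comput}, then delivers the stated equality of spans.
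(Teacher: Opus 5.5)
Your proof is correct and follows essentially the same route as the paper. Orthogonality comes by induction, using the window monotonicity $j_{i',\tau}\le j_{k,\tau}$ for $i'\le k$ together with the direct Gram--Schmidt inner-product computation. The span identity comes by telescoping $\cX^{i+1}-\cX^i=\lambda_i\cU_i$ with $\lambda_i\neq 0$; your triangular change of basis is a compact form of the paper's two inclusions.

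One small remark on how you get $\lambda_i\neq0$. Invoking Lemma~\ref{ensure-comput} and the full column rank of $M_i$ is heavier than needed. Strictly, those lemmas concern the iterates of Algorithm~\ref{TKGK}, whose agreement with \eqref{xie-eq-0323} is only established afterwards using this proposition. It suffices to note that $\gamma_i\ge\frac12\|\cD_i\|_F^2>0$ by \eqref{gammak} whenever $\cD_i\neq0$. Moreover, $\|\cU_i\|_F>0$ is implicit in $\lambda_i=\gamma_i/\|\cU_i\|_F^2$ being defined, which the paper takes for granted.
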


Now, we have already constructed the Gram-Schmidt-based TKGK method  described in Algorithm \ref{G-S-based TKGK}.

\begin{algorithm}[htpb]
	\caption{Efficient Gram-Schmidt-based TKGK \label{G-S-based TKGK}}
	\begin{algorithmic}
		\Require
		$\cA \in \mathbb{R}^{m\times \ell \times n}$, $\cB \in \mathbb{R}^{m\times p\times n}$, positive integer $\tau \geq 1$, $k=0$, an initial $\cX^0\in \mathbb{R}^{\ell\times p\times n}$, and a permutation strategy $\Pi\in \{ \text{IS}, \text{RR}, \text{SO}\}$ as defined in Strategy \ref{strat:perm-strategies}.
		\begin{enumerate}
			\item[1:] Generate \(\pi_0 = (\pi_{0,1}, \ldots, \pi_{0,m})\) according to the strategy $\Pi$. 
			\item[2:] Compute $\cD_0 = \cP_{\pi_0}\left(\cX^0\right) - \cX^0$. 
			\item[3:] If $\cD_0 = 0$, stop and go to output. Otherwise, set $\cU_0 = \cD_0$.
			\item[4:] Compute $\gamma_k = \frac{1}{2}\left(\left\|r_{\pi_{k}}\left(\cX^k\right)\right\|_{F}^{2} + \left\|\cD_k\right\|_{F}^{2}\right)$ 
			and $\lambda_k = \frac{\gamma_k}{\left\|\cU_k\right\|_{F}^2}$.
			\item[5:] Update $\cX^{k+1} = \cX^k + \lambda_k \cU_k$.
			\item[6:] Generate \(\pi_{k+1} = (\pi_{k+1,1}, \ldots, \pi_{k+1,m})\) according  to the strategy $\Pi$. 
			\item[7:] Compute $\cD_{k+1} = \cP_{\pi_{k+1}}\left(\cX^{k+1}\right) - \cX^{k+1}$.
			\item[8:] If $\cD_{k+1} = 0$, stop and go to output. Otherwise, update $j_{k+1,\tau} = \max\{k - \tau + 2, 0\}$ and compute 
			$$	\cU_{k+1} = \cD_{k+1} - \sum_{i=j_{k+1,\tau}}^{k} \frac{\langle\cU_i, \cD_{k+1}\rangle}{\left\|\cU_i\right\|_{F}^{2}}\cU_i. $$
			\item[9:] If the stopping rule is satisfied, stop and go to output. Otherwise, set $k = k + 1$ and return to Step $4$.
		\end{enumerate}
		
		\Ensure
		The approximate solution.
	\end{algorithmic}
\end{algorithm}

The following theorem demonstrates the equivalence between Algorithms \ref{TKGK} and \ref{G-S-based TKGK}.

\begin{theorem}\label{thm_equiv_Algo2_Algo3}
	If Algorithm \ref{TKGK} and Algorithm \ref{G-S-based TKGK} have the identical initial tensor $\cX^0$ and
	permutation sequence $\{\pi_k\}_{k \ge 0}$, 
	then their generated sequences $\{\cX^k\}_{k \ge 0}$ coincide exactly. 
\end{theorem}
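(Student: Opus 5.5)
We argue by induction on $k$ that the iterates of the two algorithms coincide, i.e.\ $\cX^k_{\mathrm{Alg2}} = \cX^k_{\mathrm{Alg3}}$ for all $k$ up to termination. The base case $k=0$ is immediate since both start from $\cX^0$. Because the permutations $\pi_k$ are shared, equal iterates give equal $\cD_k = \cP_{\pi_k}(\cX^k)-\cX^k$, equal $r_{\pi_k}(\cX^k)$, and hence equal $\gamma_k$ by Lemma~\ref{ensure-comput}. The stopping tests $\delta_k=0$ and $\cD_k=0$ are then the same, so both algorithms also terminate at the same index.

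For the inductive step, suppose $\cX^0,\ldots,\cX^k$ agree and $\cA\ast\cX^k\neq\cB$. By Lemma~\ref{ensure-comput}, $M_k=[V_k,\vec d^k]$ has full column rank, so \eqref{eq-s_k} has a unique solution. The block elimination \eqref{normal_equation_block}--\eqref{vec_update} carried out above shows that Step~7 of Algorithm~\ref{TKGK} equals
\[
\vec x^{k+1}=\vec x^k+\frac{\gamma_k}{\|(I-V_kV_k^\dagger)\vec d^k\|_2^2}(I-V_kV_k^\dagger)\vec d^k,
\]
using \eqref{idempotent_I-V_kV_kdagger}. Algorithm~\ref{G-S-based TKGK} instead updates along $\cU_k=\cD_k-\sum_{i=j_{k,\tau}}^{k-1}\frac{\langle\cU_i,\cD_k\rangle}{\|\cU_i\|_F^2}\cU_i$ with step $\gamma_k/\|\cU_k\|_F^2$. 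It therefore suffices to show $\tv(\cU_k)=(I-V_kV_k^\dagger)\vec d^k$.

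This follows once $\{\cU_{j_{k,\tau}},\ldots,\cU_{k-1}\}$ is a pairwise orthogonal family of \emph{nonzero} tensors spanning $\operatorname{Range}(V_k)$. In that case the displayed sum is the orthogonal projection of $\cD_k$ onto $\operatorname{Range}(V_k)$, and the claimed identity holds. Orthogonality and the span identity are exactly Proposition~\ref{basis_update_lemma}, applied to the sequences produced by \eqref{xie-eq-0323}. By the induction hypothesis, these sequences coincide up to index $k$ with the $\cX^i$ of Algorithm~\ref{G-S-based TKGK}, and hence with those of Algorithm~\ref{TKGK}.

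Nonvanishing of each $\cU_i$ needs one extra observation. Since $\cA\ast\cX^i\ne\cB$ for $i\le k$ (Lemma~\ref{equiv}), $M_i$ has full column rank, so $\vec d^i\notin\operatorname{Range}(V_i)$. Hence $\cU_i=(I-V_iV_i^\dagger)\vec d^i\neq0$, the division by $\|\cU_i\|_F^2$ is legitimate, and the span has the correct dimension $k-j_{k,\tau}$.

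The main obstacle is keeping the induction honest: Proposition~\ref{basis_update_lemma} is a statement about the sequence defined by \eqref{xie-eq-0323}, so we must verify that Algorithm~\ref{G-S-based TKGK} produces exactly that sequence. The subtle point is the truncation window. At step $k+1$ the Gram--Schmidt sum runs over $i$ from $j_{k+1,\tau}$ to $k$, and the matching basis for $V_{k+1}$ must use the shifted anchor $\cX^{k+1}$ and drop the oldest vector. We therefore check that $\operatorname{span}\{\cX^{j}-\cX^{k+1}\}_{j=j_{k+1,\tau}}^{k}$ equals $\operatorname{span}\{\cU_{j_{k+1,\tau}},\ldots,\cU_k\}$. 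Here we rely on $\cX^{i+1}-\cX^{i}=\lambda_i\cU_i$ with $\lambda_i=\gamma_i/\|\cU_i\|_F^2\neq0$, since $\gamma_i>0$ by Lemma~\ref{ensure-comput}; this is what Proposition~\ref{basis_update_lemma} packages. With this in hand, both updates produce the same $\cX^{k+1}$, which closes the induction.
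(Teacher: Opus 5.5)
Your proposal is correct and follows essentially the same route as the paper's proof. Both argue by induction on $k$, reduce Step~7 of Algorithm~\ref{TKGK} to an update along $(I-V_kV_k^\dagger)\vec d^k$ via \eqref{vec_update} and \eqref{idempotent_I-V_kV_kdagger}, and invoke Proposition~\ref{basis_update_lemma} to identify $\cU_k$ with that projected direction, so that $\lambda_k=\underline{s^k}$; your extra checks that each $\cU_i\neq 0$ (so $U_k^\top U_k$ is invertible and the divisions make sense) and that both algorithms stop at the same index are left implicit in the paper and only tighten the same argument.
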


Comparing Algorithm~\ref{TKGK} and Algorithm~\ref{G-S-based TKGK}, we see that Step 6 of Algorithm~\ref{TKGK}, which solves the linear system \eqref{eq-s_k}, now becomes a Gram-Schmidt orthogonalization. This procedure involves at most $\tau-1$ inner products $\langle\cU_i, \cD_{k+1}\rangle$, with $\|\cU_i\|_F^2$ computed in advance, each costing $\mathcal{O}(\ell p n)$ operations, resulting in $\mathcal{O}(\tau \ell p n)$ flops per iteration. This is linear in the problem dimension and much cheaper than directly solving \eqref{eq-s_k}, which requires $\mathcal{O}(\tau^2 \ell p n)$ operations; see Remark \ref{complexity_TKGK}.

\subsection{Comparison with the tridiagonal-based implementation}
\label{section4-1}

In this section, we compare the proposed Gram-Schmidt-based implementation with the tridiagonal-based approach proposed in \cite[Section 6]{rieger2023generalized}.  In particular, by exploiting the structure of the matrices $V_k^\top V_k$, it was shown in \cite[Lemma 12 and Theorem 14]{rieger2023generalized} that the inverse of $V_k^\top V_k$ admits a tridiagonal form. The Schur complement can then be used to obtain an explicit solution to the structured linear system \eqref{normal_equation_block}. This approach avoids forming the Gram matrix $M_k^\top M_k$ entirely, and matrix multiplications involving tridiagonal matrices can be performed in linear time. Nevertheless, we note that although both implementations achieve linear-time complexity, their practical procedures can differ substantially.

In fact, the tridiagonal-based approach developed in \cite[Section 6]{rieger2023generalized} can be extended naturally to tensor linear systems, as the $t$-product serves as a natural generalization of the matrix-vector product. In this implementation, the tensor array
$ [\cX^{j_{k,\tau}}, \ldots, \cX^k]
$
should be stored explicitly. To construct
\[
[\cX^{j_{k,\tau}} - \cX^k, \ldots, \cX^{k-1} - \cX^k],
\]
it is necessary to extract $[\cX^{j_{k,\tau}}, \ldots, \cX^{k-1}]$ and subtract $\cX^k$. Furthermore, when $k \ge \tau$, the array is updated as
$
[\cX^{j_{k,\tau}+1}, \ldots, \cX^k, \cX^{k+1}],
$
which removes the leftmost column and concatenates the new iterate on the right. 
These repeated extraction and concatenation operations introduce additional memory movement and communication overhead, which can limit practical efficiency even though the computational cost remains linear.

By contrast, Algorithm~\ref{G-S-based TKGK} stores orthogonal search directions rather than past iterates. Specifically, it maintains the orthogonal basis tensors
$ [\cU_{j_{k,\tau}}, \ldots, \cU_k],
$
generated using the Gram--Schmidt update
\[
\cU_{k+1}
=
\cD_{k+1}
-
\sum_{i=j_{k+1,\tau}}^{k}
\frac{\langle \cU_i,\cD_{k+1}\rangle}{\|\cU_i\|_F^2}\cU_i .
\]
This update depends only on the set $\{\cU_{j_{k+1,\tau}},\ldots,\cU_k\}$, not on their storage order. Therefore, when $k \ge \tau-1$, the implementation can overwrite the oldest basis tensor with the newly generated one, avoiding tensor extraction and concatenation altogether and thereby reducing communication and memory-traffic overhead while retaining linear-time complexity.

Finally, we note that the proposed Gram-Schmidt-based implementation not only reduces communication and memory overhead but also facilitates a natural connection between Algorithm~\ref{G-S-based TKGK} and Krylov subspace methods, as will be established in the next subsection.

\subsection{Connection to the Arnoldi-type Krylov subspace method}
\label{subsec_arnoldi}

In this subsection, leveraging the Gram-Schmidt-based implementation, we show that Algorithm~\ref{G-S-based TKGK} can recover the Arnoldi-type Krylov subspace method when either the incremental or the shuffle-once permutation strategy is used, and the truncation parameter is set to $\tau = \infty$.

For any $k\geq 0$, define 
\begin{equation}\label{g_pi_k}
	\begin{aligned}
		\cG_{\pi_{k}} := & 
		\sum^{m-1}_{i=1}\left(\cI - \cA_{\pi_{k, m}::}^{\dagger} \ast \cA_{\pi_{k, m}::}\right) 
		\ast \cdots \ast 
		\left(\cI - \cA_{\pi_{k, i+1}::}^{\dagger} \ast \cA_{\pi_{k, i+1}::}\right) 
		\ast
		\cA_{\pi_{k, i}::}^{\dagger} \ast \cB_{\pi_{k, i}::} \\
		& + \cA_{\pi_{k, m}::}^{\dagger} \ast \cB_{\pi_{k, m}::} ,
	\end{aligned}
\end{equation}
which, together with the definition of $\cT_{\pi_{k}}$ in \eqref{Tpi}, implies that  $\cP_{\pi_{k}} (\cdot) = \cT_{\pi_{k}} \ast (\cdot) + \cG_{\pi_{k}}$.
From Steps $5$ and $7$ in Algorithm~\ref{G-S-based TKGK}, we know that $\cX^{k+1} = \cX^k + \lambda_k \cU_k$ and $\cD_{k+1} = \cP_{\pi_{k+1}}\left(\cX^{k+1}\right) - \cX^{k+1}$. Thus, 
\begin{equation}\label{eq-0326-1-xie}
	\begin{aligned}
		\cD_{k+1} &=\cT_{\pi_{k+1}} \ast \cX^{k+1} + \cG_{\pi_{k+1}}-\cX^{k+1}\\
		&=\cT_{\pi_{k+1}}\ast\cX^k+\cG_{\pi_{k+1}}-\cX^k-\lambda_k(\cI - \cT_{\pi_{k+1}})\ast \cU_k\\
		&=\cP_{\pi_{k+1}}(\cX^{k})-\cX^k-\lambda_k(\cI - \cT_{\pi_{k+1}})\ast \cU_k.
	\end{aligned}
\end{equation}

If Algorithm~\ref{G-S-based TKGK} employs either the incremental or shuffle-once permutation strategy, then $\pi_k = \pi_0$ for all $k \ge 0$; that is, the permutation remains fixed throughout the iterations. Therefore, $\cP_{\pi_{0}}(\cX^{k})-\cX^k=\cD_k$, so \eqref{eq-0326-1-xie} simplifies to
\[
\cD_{k+1} =\cD_k-\lambda_k(\cI - \cT_{\pi_{0}})\ast \cU_k=\cD_k-\lambda_k \cC_{\pi_0}\ast \cU_k,
\]
where \(
\cC_{\pi_0} := \cI - \cT_{\pi_0}.
\)
Substituting the above expression into the Gram-Schmidt process in Step 8 of Algorithm~\ref{G-S-based TKGK} yields
\begin{equation}\label{Arnoldi_intermediate_step}
	\cU_{k+1} 
	= \cD_k - \sum_{i=0}^{k} \frac{\langle\cU_i, \cD_k\rangle}{\lVert\cU_i\rVert_F^2}\cU_i
	- \lambda_k\Big(\cC_{\pi_{0}} \ast \cU_k - \sum_{i=0}^{k} 
	\frac{\langle \cU_i, \cC_{\pi_{0}} \ast \cU_k \rangle}{\left\|\cU_i\right\|_F^2}\cU_i\Big).
\end{equation}
Since $\cD_k\in \spn\{\cU_0, \ldots, \cU_k\}$ and $\{\cU_0, \ldots, \cU_k\}$ forms an orthogonal basis, it follows that
$\cD_k - \sum_{i=0}^{k} \frac{\langle\cU_i, \cD_k\rangle}{\lVert\cU_i\rVert_F^2}\cU_i = 0$.
Thus, \eqref{Arnoldi_intermediate_step} simplifies to
\begin{align}\label{Arnoldi_linear_combination_form}
	\cC_{\pi_0} \ast \cU_k
	=
	\sum_{i=0}^{k}
	\frac{\langle \cU_i,\, \cC_{\pi_0} \ast \cU_k \rangle}
	{\|\cU_i\|_F^2}\cU_i
	-
	\frac{1}{\lambda_k}\cU_{k+1},
\end{align}
This demonstrates that   $\cC_{\pi_0} \ast \cU_k\in \spn\{\cU_0, \ldots, \cU_{k+1}\}$, thereby revealing the upper Hessenberg structure intrinsic to the Arnoldi process. 

Indeed, to further highlight the analogy with the classical Arnoldi decomposition~\cite[Section~10.5.1]{golub2013matrix}, we rewrite the above relation in a compact $t$-product form. For convenience, let us define
\[
\cU^k :=
\begin{bmatrix}
	\cU_0, \ldots, \cU_k
\end{bmatrix}
\in \mathbb{R}^{\ell \times (k+1)p \times n},
\]
and let $\cE_k \in \mathbb{R}^{(k+1)p \times p \times n}$ denote the tensor whose frontal slices are all zero except for the first one,  $\cE_{k_{::1}} = e_{k+1} \otimes I_p$,
where $\otimes$ denotes the Kronecker product. 
Moreover, we define the block Hessenberg tensor $\cH_k \in \mathbb R^{(k+1)p \times (k+1)p \times n}$
such that all frontal slices vanish except
\[
\cH_{k_{::1}} = H_k \otimes I_p,
\]
where $H_k \in \mathbb{R}^{(k+1)\times(k+1)}$ is the upper Hessenberg matrix generated by~\eqref{Arnoldi_linear_combination_form}, with entries
$h_{i,j} = \frac{\langle\cU_{i-1}, \cC_{\pi_0} \ast \cU_{j-1}\rangle}{\left\|\cU_{i-1}\right\|_F^2}$, 
$1 \le i \le j \le k+1$ and 
$h_{j+1,j} = -\frac{1}{\lambda_{j-1}}, 1 \le j \le k$,
$k \ge 1$. 
Let $\cR_{k} := h_{k+1, k}\cU_{k+1}$. 
Then, \eqref{Arnoldi_linear_combination_form} admits the compact $t$-product representation
\begin{align}\label{tensor_Arnoldi_decomposition}
	\cC_{\pi_0} \ast \cU^k
	=
	\cU^k \ast \cH_k + \cR_{k} \ast \cE_k^{\top}, 
\end{align}
which serves as the tensor analogue of the classical Arnoldi relation in~\cite[Section~10.5.1]{golub2013matrix} with the linear operator $\cC_{\pi_0}$, and it also coincides with the Arnoldi-type decomposition underlying the tensor $T$-global Arnoldi method proposed in~\cite{guide2020tensor}.

Therefore, when either the incremental or shuffle-once permutation strategy is used and the truncation parameter is set to \( \tau = \infty \), Algorithm~\ref{G-S-based TKGK} can be viewed as a Krylov subspace method driven by Arnoldi orthogonalization on the space \(\cX^0 + \cK_{k+1}\left(\cC_{\pi_0},\, \cG_{\pi_0} - \cC_{\pi_0} \ast \cX^{0} \right)\). Here,  given a linear operator $\cB $ and a tensor $\cR$, the order-$k$ Krylov subspace is defined (see \cite[Section 10.1.1]{golub2013matrix}) as
\(
\mathcal{K}_k(\cB, \cR) := \text{span}\{\cR, \cB \ast\cR, \cB^2\ast\cR, \ldots, \cB^{k-1}\ast\cR\}.
\)

Finally, let us present two remarks.  

\begin{remark}
	Note that we have already demonstrated that Algorithm~\ref{G-S-based TKGK} reduces to an Arnoldi-type Krylov subspace method when \( \tau = \infty \). In fact, for finite \( \tau \), the Gram-Schmidt-based implementation can be viewed as a truncated Krylov subspace method, where \( \tau \) serves as the truncation parameter determining the number of previous iterates used.
\end{remark} 

\begin{remark}
	Although the results in this paper have been developed within the tensor $t$-product framework, our approach and analysis can be readily extended to other classes of linear systems, provided an appropriate analogue of matrix-vector multiplication is preserved. In such cases, the methods proposed here can be adapted with minimal modifications.
\end{remark}

\section{Numerical experiments}\label{Sec_numer_exp}

In this section, we present numerical experiments on both synthetic and real-world datasets to validate our theoretical results.
On synthetic data, our experiments are specifically designed to evaluate the performance of our 
Gram-Schmidt-based implementation (Algorithm~\ref{G-S-based TKGK}), which we refer to as GS-TKGK. 
We first compare the computational efficiency of the proposed method with 
the tensor extension of the tridiagonal-based method~\cite{rieger2023generalized}, 
as analyzed in Section~\ref{section4-1}. 
We then systematically examine the impact of different truncation parameters on convergence behavior.
On a real-world video dataset, we further compare Algorithm~\ref{G-S-based TKGK} with two state-of-the-art algorithms, 
namely, the TK method and the tensor average Kaczmarz method with stochastic heavy ball momentum (tAKSHBM) method~\cite{liao2024accelerated,zeng2024adaptive}.
All the experiments are conducted on MATLAB R2025a for Windows 11 on a LAPTOP PC with an Intel Core Ultra 7 255H @ 2.00 GHz and 32 GB RAM.
The code to reproduce our results can be found at \href{https://github.com/xiejx-math/TKGK-codes}{https://github.com/xiejx-math/TKGK-codes}.

\subsection{Synthetic data}
In this section, we generate synthetic tensors $\cA \in \mathbb{R}^{m \times \ell \times n}$ as follows. 
Given target rank $r$ and a prescribed condition number $\kappa > 1$ for frontal slices, 
we construct
\(
\cA(:, :, i) = U_i D_i V_i^\top, i = 1, 2, \ldots, n,
\)
where $U_i \in \mathbb{R}^{m \times r}$, $D_i \in \mathbb{R}^{r \times r}$, and $V_i \in \mathbb{R}^{\ell \times r}$.  
In MATLAB notation, these are obtained by $\tt{[U_i, \sim] = qr(randn(m, r), 0)}, \tt{[V_i, \sim] = qr(randn(\ell, r), 0)}$, and $\tt{D_i = diag(1 + (kappa - 1) .* rand(r, 1))}.$
This ensures that the rank and condition number of each frontal slice are bounded above by $r$ and $\kappa$, respectively.
To form a consistent tensor linear system $\cA \ast \cX = \cB$, we generate the ground-truth solution as $\cX^* = \tt{randn(\ell, p, n)}$ and set $\cB = \cA \ast \cX^*$. 
All algorithms start from $\cX^0 = 0$, 
and terminate the iteration when the
\(
\mathrm{RSE} = \frac{\|\cX^k - \cA^{\dagger} \ast \cB\|_F^2}{\|\cX^0 - \cA^{\dagger} \ast \cB\|_F^2}
\)
falls below a prescribed tolerance, or when the maximum number of iterations is reached.
Each experiment is independently repeated 10 times to capture the variability due to random data generation. 
In all subsequent figures, the central line denotes the median across trials, the dark shaded band corresponds to the interquartile range between the 25th to 75th percentiles, and the light shaded region covers the full range from the minimum to the maximum observed values.

\subsubsection{Comparison to the tridiagonal-based implementation}

In this subsection, we compare GS-TKGK with a tridiagonal-based implementation (Tri-TKGK) adapted from \cite[Section 6]{rieger2023generalized}, focusing solely on Shuffle-once permutation strategy, specifically GS-TKGK-SO and Tri-TKGK-SO.
Both methods adopt update schemes for $\mathcal{X}^k$ derived from~\eqref{RIM_subspace}, yet differ in their realization.
In contrast to the Gram-Schmidt process employed in our work,
the Tri-TKGK-SO computes the solution $s^k$ of linear system~\eqref{eq-s_k} via the expression~\eqref{oversk_undersk}. 
To reduce the cost of computing $(V_k^\top V_k)^{-1} \in \mathbb{R}^{(k-j_{k,\tau})\times(k-j_{k,\tau})}$ in~\eqref{oversk_undersk}, Tri-TKGK-SO exploits the explicit tridiagonal structure of the inverse, whose non-zero entries are given by
\[
\begin{aligned}
	((V_k^\top V_k)^{-1})_{i,i} &= 
	\begin{cases}
		\bigl(\gamma_{j_{k,\tau}}\,\underline{s^{\,j_{k,\tau}}}\bigr)^{-1}, & i = 1, \\[6pt]
		\bigl(\gamma_{j_{k,\tau}+i-2}\,\underline{s^{\,j_{k,\tau}+i-2}}\bigr)^{-1}
		+
		\bigl(\gamma_{j_{k,\tau}+i-1}\,\underline{s^{\,j_{k,\tau}+i-1}}\bigr)^{-1}, & i = 2, \dots, k-j_{k,\tau},
	\end{cases} \\[8pt]
	((V_k^\top V_k)^{-1})_{i,i+1} 
	&= ((V_k^\top V_k)^{-1})_{i+1,i} 
	= -\bigl(\gamma_{j_{k,\tau}+i-1}\,\underline{s^{\,j_{k,\tau}+i-1}}\bigr)^{-1}, \quad i = 1, \dots, k-j_{k,\tau}-1.
\end{aligned}
\]

Figure~\ref{fig_comparison_GS_TRI} shows that both implementations require identical iteration counts for all tested $\kappa$ and $r$, 
which is consistent with their mathematical equivalence. 
However, GS-TKGK-SO consistently achieves lower CPU time.
This performance difference is consistent with the theoretical complexity analysis in Section~\ref{section4-1}.
Tri-TKGK-SO involves repeated extraction and concatenation operations on tensor arrays, 
incurring additional memory movement, 
whereas GS-TKGK-SO performs in-place updates based on orthogonal bases, thereby reducing computational overhead.

\begin{figure}[h!]
	\centering
	\includegraphics[width=0.32\linewidth]{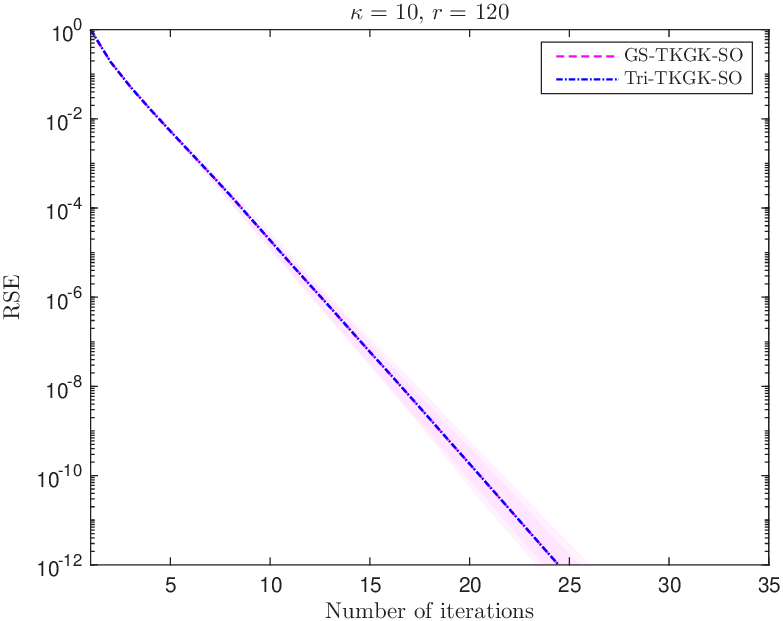}
	\includegraphics[width=0.32\linewidth]{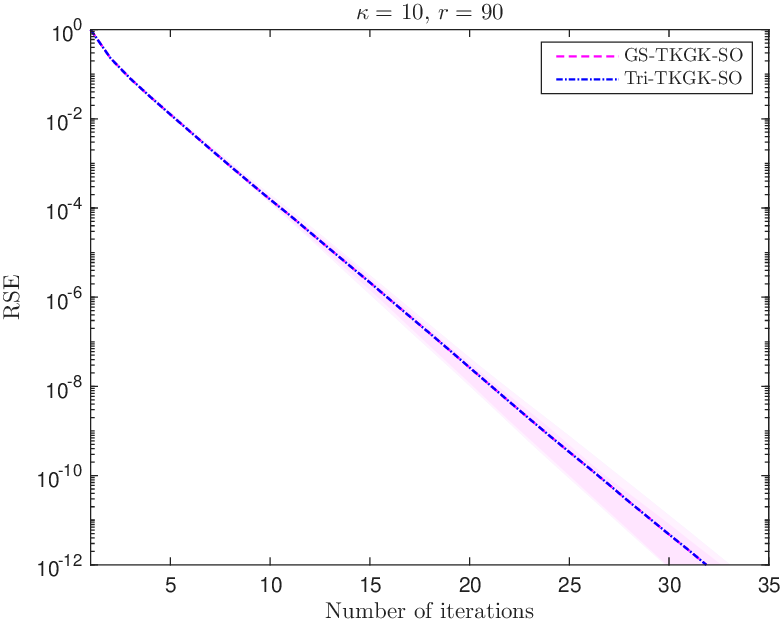}
	\includegraphics[width=0.32\linewidth]{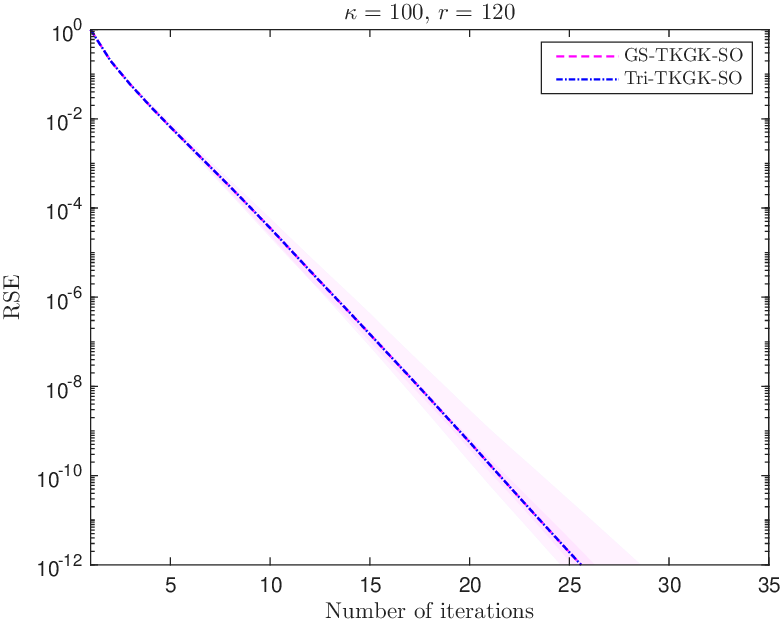}\\
	\includegraphics[width=0.32\linewidth]{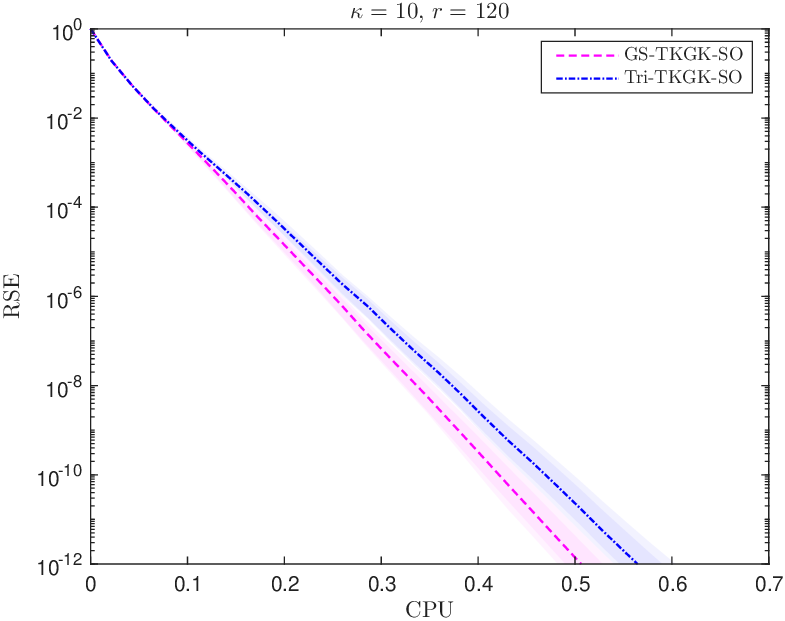}
	\includegraphics[width=0.32\linewidth]{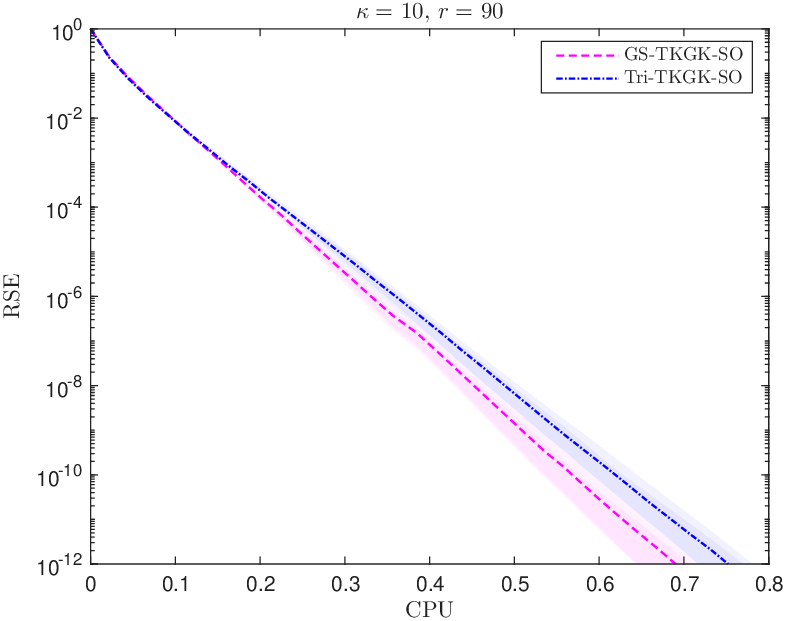}
	\includegraphics[width=0.32\linewidth]{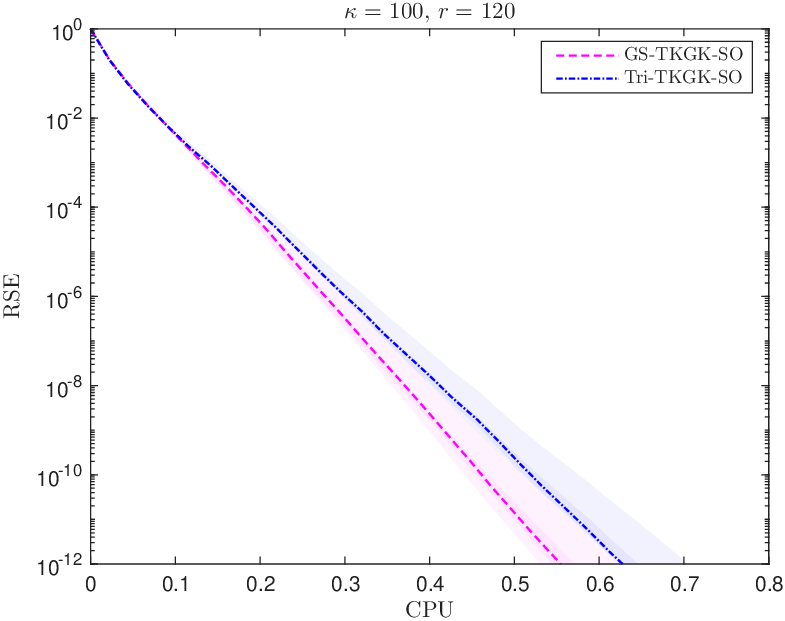}
	\caption{Comparison of iteration numbers (top) and CPU time (bottom) with respect to 
		$\kappa$ and $r$ for the Gram--Schmidt-based and tridiagonal-based implementations. 
		The title of each subplot indicates the corresponding values of $\kappa$ and $r$, 
		with other parameter fixed as $m = 200, \ell = 120, n = 3, p = 120$, and $\tau = 10$.
		All experiments terminate once the RSE $< 10^{-12}$.}
	\label{fig_comparison_GS_TRI}
\end{figure}

\subsubsection{Choices of the truncation parameter $\tau$}
In this subsection, we investigate the impact of the truncation parameter $\tau$ on the convergence behavior of the GS-TKGK-SO. 
The performance of the algorithms is measured in both the CPU time and the number of iterations.
The results are displayed in Figure \ref{fig_comparison_t}.

As shown in Figure~\ref{fig_comparison_t}, the number of iterations decreases monotonically as the truncation parameter $\tau$ increases over the tested values $\tau = 1, 2, 3, 5,$ and $10$. 
In contrast, the CPU time decreases for $\tau = 1, 2, 3,$ and $5$, but remains nearly unchanged when $\tau$ is increased from $5$ to $10$. 
This indicates that $\tau = 5$ achieves a good balance between the number of iterations and CPU time.

\begin{figure}[h!]
	\centering
	\includegraphics[width=0.32\linewidth]{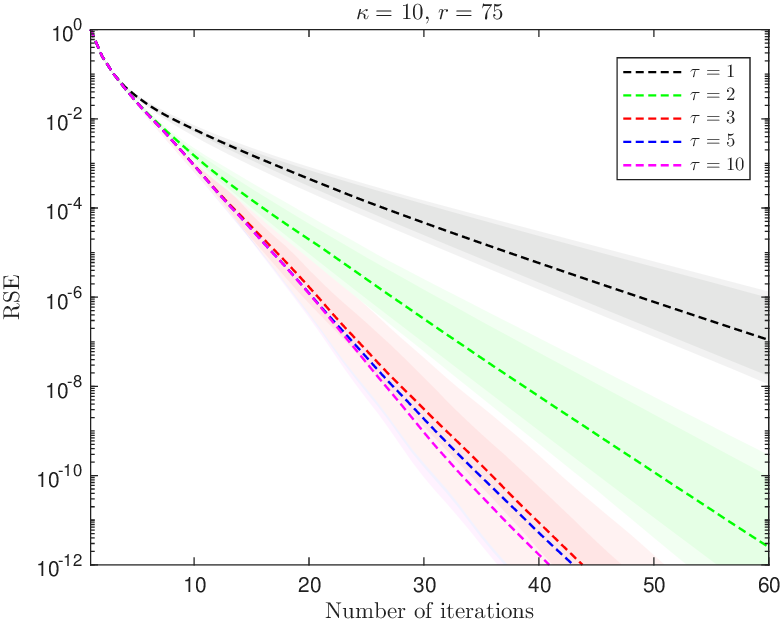}
	\includegraphics[width=0.32\linewidth]{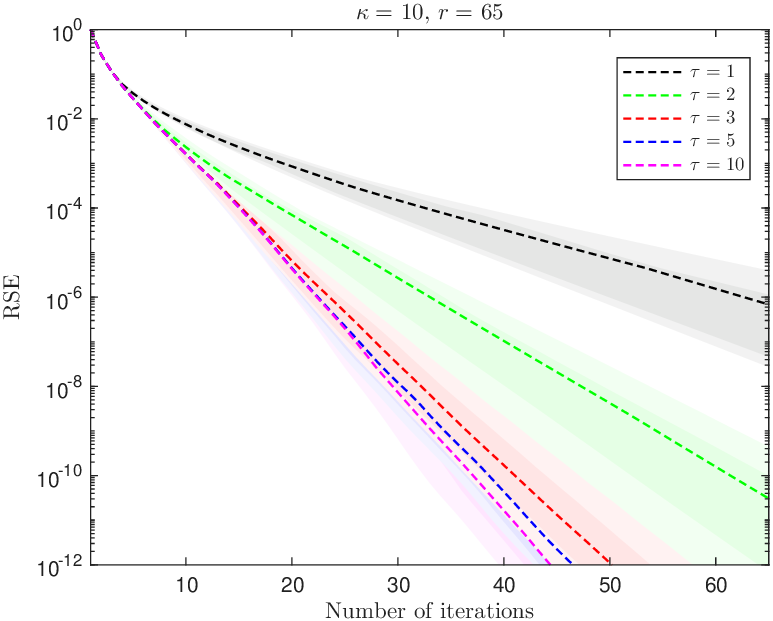}
	\includegraphics[width=0.32\linewidth]{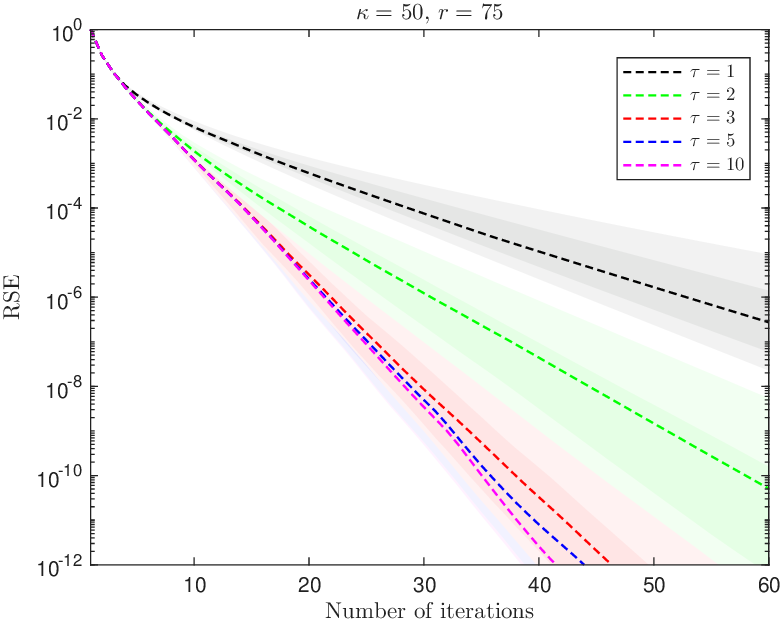}\\
	\includegraphics[width=0.32\linewidth]{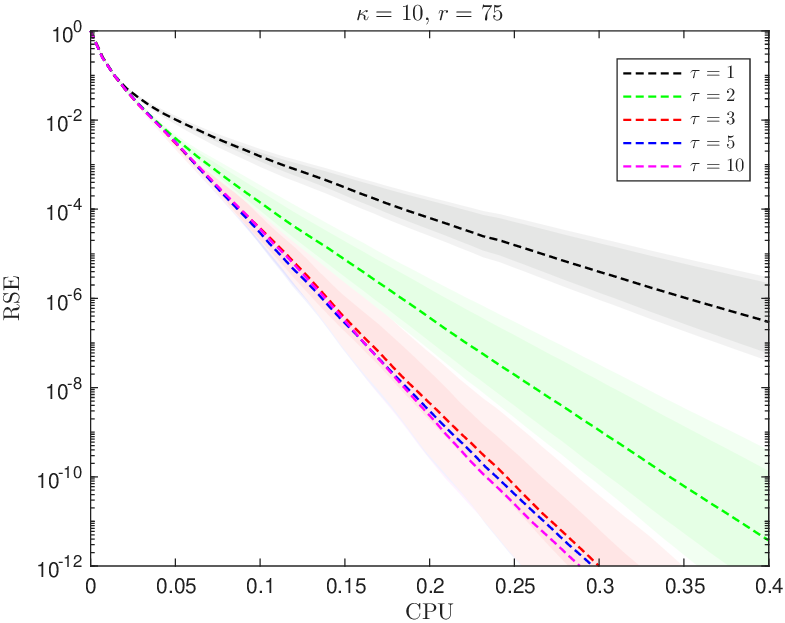}
	\includegraphics[width=0.32\linewidth]{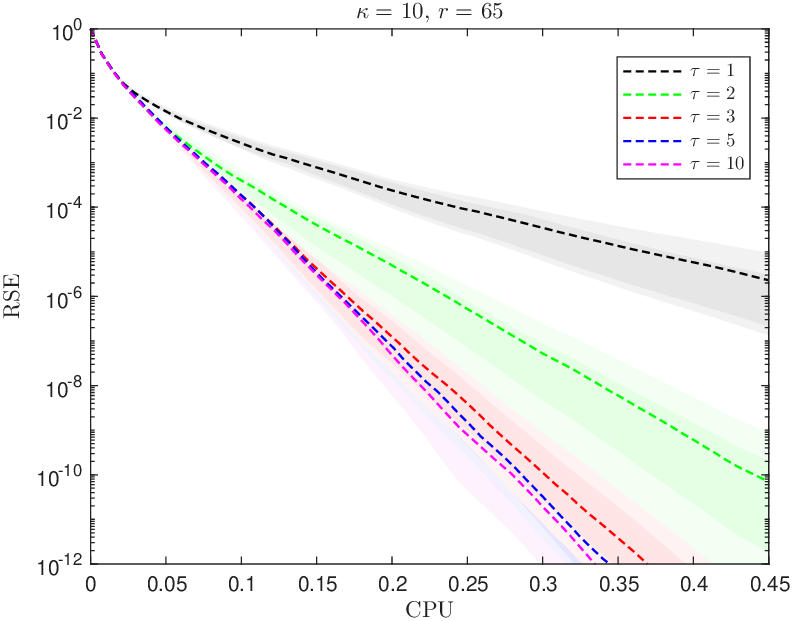}
	\includegraphics[width=0.32\linewidth]{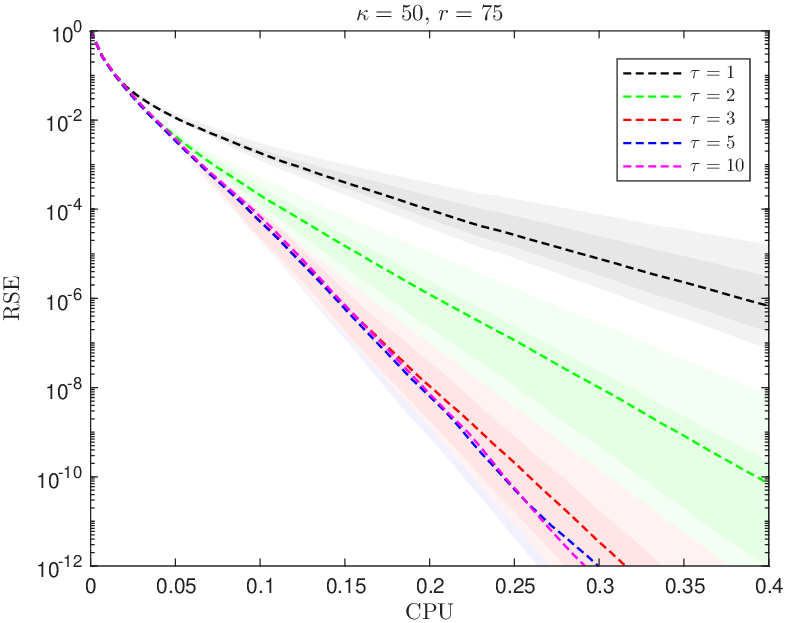}
	\caption{The figures illustrate the evolution of RSE with respect to the
		number of iterations (top) and the CPU time (bottom). The title of each
		subplot indicates the corresponding values of $\kappa$ and $r$. We set $m=100, \ell = 75, n = 3$, and $p = 75$.
		All experiments terminate once the RSE $< 10^{-12}$.}
	\label{fig_comparison_t}
\end{figure}

\subsection{Real world data}
In this section, we employ the built-in MATLAB video dataset \texttt{traffic.avi} as the ground truth to simulate the deblurring process.
%
%
We use the blur tensor $\cA$ from \cite{reichel2022tensor}, whose frontal slices are given by
\[
\mathcal{A}_{::j} = M_2(j,1) M_1, \quad j = 1,\dots,n,
\]
where $M_1$ and $M_2$ are Toeplitz matrices defined in MATLAB notation as
\[
\tt{M_1 = \frac{1}{\sqrt{2\pi\sigma}} toeplitz(z_1), \ M_2 = \frac{1}{\sqrt{2\pi\sigma}} toeplitz(z_1,z_2)},
\]
with 
\[
\left\{
\begin{aligned}
	\tt{z_1} &= \tt{\left[ exp\left( -\left( (0:\texttt{band}-1).^2/(2\sigma^2) \right) \right),\; \texttt{zeros}(1, \ell-\texttt{band}) \right], }\\
	\tt{z_2} &= [\tt{z_1(1)\cdot \texttt{fliplr}(z_1(2:\texttt{end}))]}.
\end{aligned}
\right.
\]
Here, \texttt{band} denotes the bandwidth of the Toeplitz blocks and $\sigma > 0$ is the standard deviation of the Gaussian point-spread function, specifically, larger values of $\sigma$ correspond to stronger blurring. 
Throughout the experiments, we set \texttt{band} $= 6, \sigma = 1.8$.
The observed blurred tensor $\mathcal{B}$ is then computed as $\mathcal{B} = \mathcal{A} * \mathcal{X}^{*}$, where $\mathcal{X}^{*}$ denotes the \texttt{traffic.avi} video stored as a $120 \times 160 \times 120$ third order tensor. 
All algorithms initialize with $\cX^0 = 0$.
To evaluate the quality of the reconstructed videos, we compute the peak signal-to-noise ratio (PSNR) and structural similarity index measure (SSIM) between the ground truth and the recovered results using MATLAB's built-in functions \texttt{psnr} and \texttt{ssim}.

In the following experiments, we evaluate three sampling strategies implemented within the GS-TKGK framework described in Algorithm~\ref{G-S-based TKGK}, namely GK-TKGK-RR, GS-TKGK-SO, and GS-TKGK-IS, with truncation parameter $\tau = 5$. 
For comparison, we also consider the TK method under the same three sampling strategies (TK-RR, TK-SO, and TK-IS) as well as the tAKSHBM method.
Specifically, the tAKSHBM method iterate as
\begin{equation*}
	\cX^{k+1} = \cX^{k} - \alpha_k \nabla f_{S_k}(\cX^{k}) + \beta_k (\cX^{k} - \cX^{k-1}),
\end{equation*}
where 
$\nabla f_{S_k}(\cX^{k}) = \mathcal{A}_{\tau_{i_k}::}^T \ast (\mathcal{A}_{\tau_{i_k}::} \ast \cX^{k} - \mathcal{B}_{\tau_{i_k}::})$, 
and $\tau_{i_k} \subseteq [m]$ with $|\tau_{i_k}| = q$ is selected with probability $\|\mathcal{A}_{\tau_{i_k}::}\|_F^2 / \|\mathcal{A}\|_F^2$.
Here, $\alpha_k$ and $\beta_k$ are computed by minimizing 
$\lVert \cX^{k} - \alpha \nabla f_{S_k}(\cX^{k}) + \beta (\cX^{k} - \cX^{k-1})-\cA^\dagger \ast \cB\rVert_F^2$.
The performance of the algorithms is evaluated in terms of CPU time and the number of full iterations, with the latter ensuring a uniform count of operations across all algorithms when traversing the horizontal slices of the tensor $\cA$. 
Specifically, for the TKGK and TK algorithms, the number of full iterations is equivalent to the number of algorithmic iterations.
For tAKSHBM, we adopt the block size $q=15$ as used in~\cite[Example 4]{liao2024accelerated},
and its number of full iterations is defined as the iterations count multiplied by the block size $q$ and divided by the number of horizontal slices $m$. 

Figure~\ref{fig_video_tK_GSTKGK_tAKSHBM} presents the convergence behaviors of all the algorithms over $2000$ full iterations.
It shows that GS-TKGK framework, particularly GS-TKGK-SO, consistently outperforms both TK and tAKSHBM in terms of full iterations and CPU time at termination. 
This indicates that GS-TKGK improves both convergence efficiency and reconstruction accuracy for tensor-based deblurring problems.

\begin{figure}[h!]
	\centering
	\includegraphics[width=0.48\linewidth]{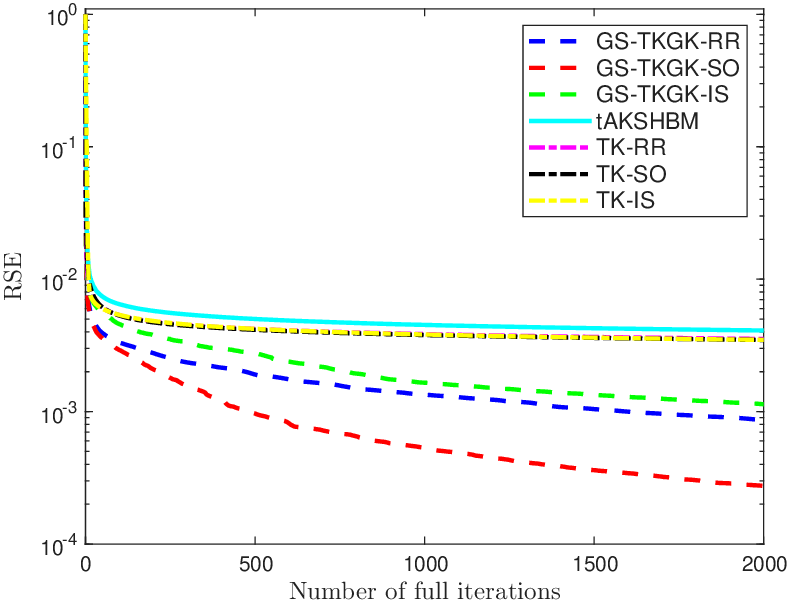} 
	\includegraphics[width=0.48\linewidth]{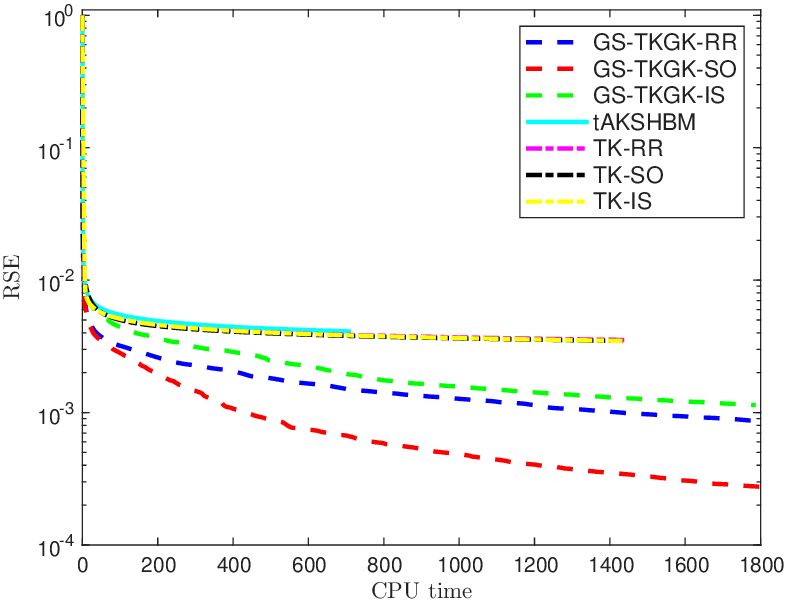}
	\caption{Comparison of RSE versus number of full iterations (left) and CPU time (right) for 
		TK, GS-TKGK, and tAKSHBM on blurred \texttt{traffic.avi} video reconstruction. 
		The algorithm terminates after at most 2000 full iterations.}
	\label{fig_video_tK_GSTKGK_tAKSHBM}
\end{figure}


Table~\ref{tab:video_method_comparison} summarize the computational results and reconstruction performance of all the algorithms when terminated at RSE $< 5 \times 10^{-3}$.
It indicates that, 
under the same stopping criterion, the proposed GS-TKGK method is substantially more efficient than both TK and tAKSHBM in terms of the number of full iterations and the CPU time, while all methods achieve nearly indistinguishable reconstruction quality in terms of PSNR and SSIM.
For both TK and GS-TKGK, the permutation strategy has a clear effect on convergence speed but only a minor impact on reconstruction accuracy. Among the three strategies, SO yields the fastest convergence, IS requires more iterations and computational effort but attains slightly better image quality, and RR lies in between. 
Figure \ref{fig_video_reconstruction} shows the 20th, 51st, 72nd, and 100th original frames, 
the corresponding blurred observation, and the images recovered by tAKSHBM, GS-TKGK-SO and TK-SO. 

\begin{table}[h]
	\centering
	\small 
	\setlength{\tabcolsep}{4pt} 
	\caption{Performance results including the full iterations, CPU time, and the average PSNR/SSIM over 120 frames. 
		All experiments terminate once the RSE $< 5 \times 10^{-3}$.}
	\begin{tabular}{lcccc} 
		\toprule
		{Methods} & \makecell{Number of full \\ iterations} & {CPU} & {PSNR} & {SSIM} \\
		\midrule
		GS-TKGK-RR & 21 & 15.853 & 28.9672 & 0.9166 \\
		GS-TKGK-SO  & \textbf{16} & \textbf{12.180} & 28.9532 & 0.9126  \\
		GS-TKGK-IS &  79 & 61.618 & \textbf{28.9712} & 0.9281 \\
		tAKSHBM  & 499 & 169.517 & 28.9423 &  0.9252  \\
		TK-RR & 137 & 92.170 & 28.9162 & 0.9236\\
		TK-SO & 135 & 89.198 & 28.9182 & 0.9211\\
		TK-IS & 161 & 106.234 & 28.9210 & \textbf{0.9291}\\
		\bottomrule
	\end{tabular}
	\label{tab:video_method_comparison}
\end{table}

\begin{figure}[h!]
	\centering
	\includegraphics[width=0.96\linewidth]{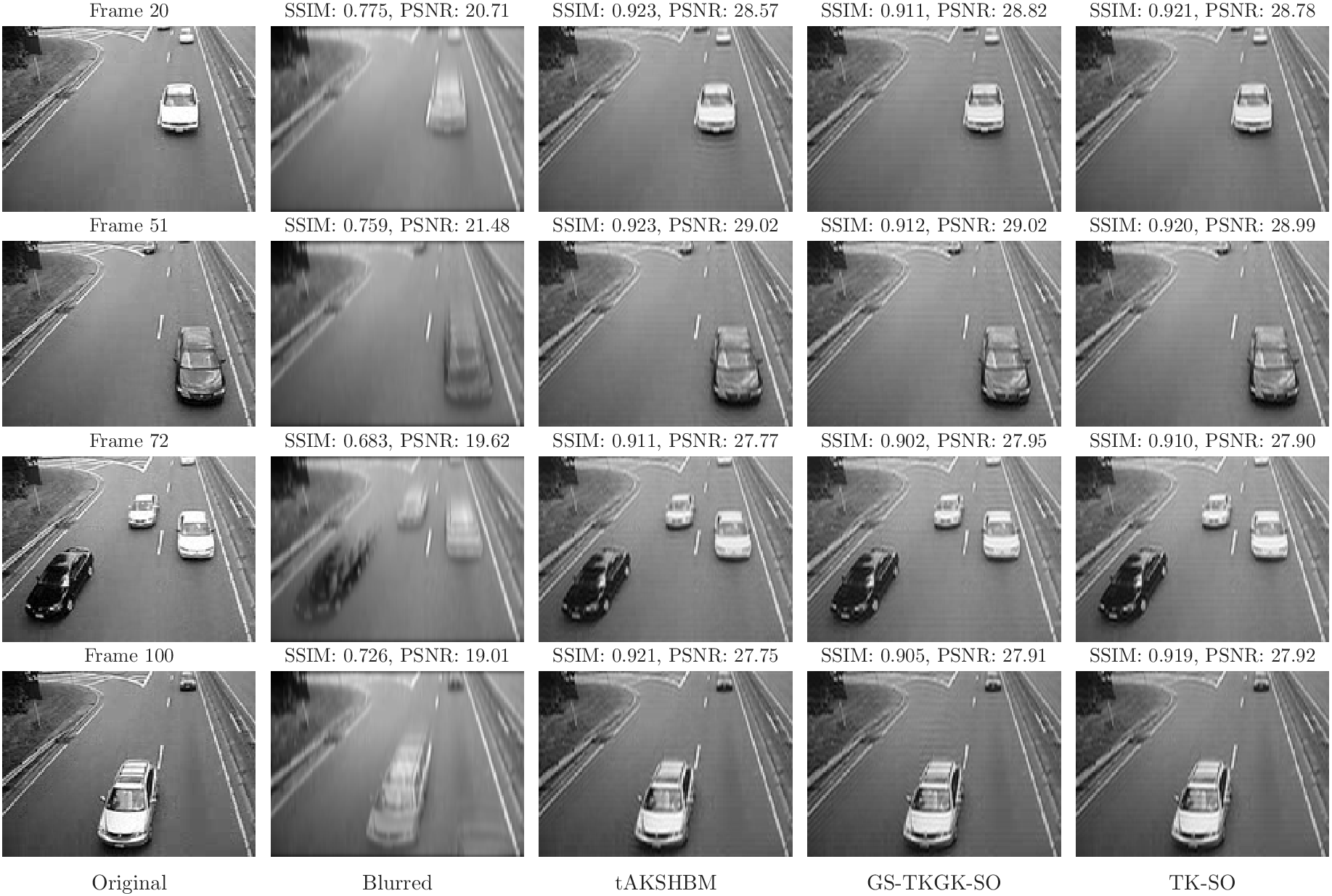} 
	\caption{Blurred \texttt{traffic.avi} video reconstruction results for 
		tAKSHBM, GS-TKGK-SO and TK-SO on frames indexed by $20, 51, 72,$ and $100$. 
		The title of each subplot (except the first column of original frames) reports the PSNR and SSIM values of the recovered or blurred frame with respect to the corresponding original frame.
		The algorithms terminate if RSE $< 5\times 10^{-3}$.}
	\label{fig_video_reconstruction}
\end{figure}

\section{Concluding remarks}
\label{Sec_conclu_rmk}

We have extended and analyzed the Gearhart-Koshy accelerated Kaczmarz methods, incorporating RR, SO, and IS sampling strategies, for application to tensor linear systems. We demonstrated that the proposed TKGK method exhibits linear convergence, with a convergence factor superior to that of the plain Kaczmarz method. By exploiting the specific structure of the problem, we proposed an efficient Gram-Schmidt-based implementation to compute the next iterate in linear time. Building on this implementation,  we established a connection between the Gearhart-Koshy accelerated Kaczmarz method and Arnoldi-type Krylov subspace methods when either the SO or IS permutation strategy is employed and the truncation parameter is set to $\tau = \infty$. Numerical results confirm the efficiency of the proposed method.
 
 
 It is usual in practical applications to encounter the problem of solving the tensor linear system $\cA \ast \cX = \cB$ subject to additional optimality criteria, such as sparsity or low-rank \cite{chen2021regularized,zeng2026stochastic,jeong2025linear,Schopfer2019Linear}. The extension of the Gearhart-Koshy acceleration to obtain solutions satisfying these optimality criteria represents a valuable avenue for future research. Moreover, tensor linear systems arising in practical problems are likely to be inconsistent due to noise \cite{zeng2025adaptive,du2019tight,zouzias2013randomized,MaConvergence15}, making the extension of the TKGK method to handle inconsistent systems another important avenue for further investigation.
 

\bibliographystyle{abbrv}
\bibliography{ref}

\section{Appendix. Proof of the main results}

\subsection{Proof of Theorem~\ref{trrk_convergence_thm}}

	\begin{proof}[Proof of Theorem~\ref{trrk_convergence_thm}]
		To establish the results, we first note that all iterates of Algorithm~\ref{tensor-k} belong to the affine subspace $\cX^0 +\operatorname{range}_K(\cA^{\dagger})$.
		Indeed, for any $1 \le i \le m$, we have
		\[
		\operatorname{range}_K(\cA_{i::}^\dagger) = \operatorname{range}_K(\cA_{i::}^\top) 
		\subseteq \operatorname{range}_K(\cA^\top) = \operatorname{range}_K(\cA^\dagger),
		\] 
		where the two equalities follows from~\eqref{range_Adagger_eq_range_Atop} 
		and the inclusion follows from the fact that $\cA_{i::}^{\top} = \cA^{\top} \ast \cI_{i::}$.
		Consequently, Algorithm~\ref{tensor-k} ensures that
		\(
		\cX^k \in \cX^0 + \operatorname{range}_K(\cA^\dagger)
		\). 
		Besides, $\cX_*^0 = \cA^{\dagger} \ast \cB + (\cI - \cA^{\dagger} \ast \cA) \ast \cX^0
		= \cX^0 + \cA^{\dagger} \ast (\cB - \cA \ast \cX^0) \in \cX^0 + \operatorname{range}_K(\cA^\dagger)$,
		therefore $\cX^k-\cX_*^0 \in \operatorname{range}_K(\cA^{\dagger})$. 
		Since $\cA^{\dagger} \ast \cA$ is the projector onto $\operatorname{range}_K(\cA^{\dagger})$, it follows
		\begin{equation}\label{eq:proj_identity}
			\cA^\dagger \ast \cA \ast (\cX^k-\cX_*^0) = \cX^k-\cX_*^0 .
		\end{equation}
		By the definitions of $\cT_{\pi_{k}}$, $\cP_{\pi_{k}}$, 
		and $\cG_{\pi_{k}}$ in \eqref{Tpi}, \eqref{full-projection-oper}, and \eqref{g_pi_k}, 
		we know that one epoch of Algorithm~\ref{tensor-k} can be equivalently rewritten as 
		$\cX^{k+1} = \cP_{\pi_{k}}(\cX^k) = \cT_{\pi_{k}} \ast \cX^k + \cG_{\pi_{k}}$.
		Hence we have
		\begin{equation}\label{sun_0404}
			\begin{aligned}
				\lVert\cX^{k+1} - \cX_*^0\rVert_F^2 &= \lVert\cP_{\pi_{k}}(\cX^k) - \cX_*^0\rVert_F^2 
				= \lVert\cT_{\pi_k}\ast(\cX^k-\cX_*^0)\rVert_F^2 \\
				& = \lVert\cT_{\pi_k}\ast\cA^\dagger\ast\cA\ast(\cX^k-\cX_*^0)\rVert_F^2 \\ 
				& = \lVert\bc(\cT_{\pi_k} \ast \cA^\dagger \ast \cA)\uf(\cX^k-\cX_*^0)\rVert_F^2 \\
				& \le \lVert\bc(\cT_{\pi_k} \ast \cA^\dagger \ast \cA)\rVert_2^2 \, \lVert\uf(\cX^k-\cX_*^0)\rVert_F^2 \\
				& = \lVert\bc(\cT_{\pi_k} \ast \cA^\dagger \ast \cA)\rVert_2^2 \, \lVert\cX^k-\cX_*^0\rVert_F^2
			\end{aligned}
		\end{equation}
		where the third equality follows from~\eqref{eq:proj_identity}.
		
		We next prove $\rho_k = \left\|\bc(\cT_{\pi_k} \ast \cA^\dagger \ast \cA)\right\|_2^2 < 1$. 
		By \cite[Lemma~1 (iii)]{lund2020tensor}, we have
		\[
		\bc(\cT_{\pi_k}\ast \cA^\dagger\ast \cA)
		=\bc(\cT_{\pi_k})\bc(\cA^\dagger)\bc(\cA),
		\]
		The idea of the proof is similar to that of Lemma 1 in \cite{han2025simple}. Specifically, 
		it suffices to prove that $\lVert\bc(\cT_{\pi_k})\bc(\cA^\dagger)\bc(\cA)z\rVert_2^2 < \lVert z\rVert_2^2$ for any $z \in \mathbb{R}^{\ell \times p \times n}$, $ z \neq 0$. 
		Here we note that for vectors, $\lVert\cdot\rVert_2$ denotes the Euclidean norm. 
		If $\bc(\cA^\dagger)\bc(\cA)z = 0$ the inequality is already satisfied.
		If $\bc(\cA^\dagger)\bc(\cA)z \neq 0$, then
		$$\bc(\cA)\left(\bc(\cA^\dagger)\bc(\cA)z\right) \neq 0. $$
		Consequently, there exists a certain $i_0 \in [m]$ such that $\bc(\cA_{\pi_{k,i_{0}}::})\bc(\cA^{\dagger})\bc(\cA)z \neq 0$, implying
		\begin{align*}
			\lVert&\bc(\cI - \cA_{\pi_{k,i_{0}}::}^{\dagger}\cA_{\pi_{k,i_{0}}::})\bc(\cA^{\dagger})\bc(\cA)z\rVert_2^2 \\
			&= \lVert\bc(\cA^{\dagger})\bc(\cA)z\rVert_2^2 - \lVert\bc(\cA_{\pi_{k,i_{0}}::}^{\dagger}\cA_{\pi_{k,i_{0}}::})\bc(\cA^{\dagger})\bc(\cA)z\rVert_2^2 \\
			& < \lVert\bc(\cA^{\dagger})\bc(\cA)z\rVert_2^2 \le \lVert z\rVert_2^2.
		\end{align*}
		Therefore, we obtain 
		\begin{align*}
			\lVert&\bc(\cT_{\pi_k})\bc(\cA^\dagger)\bc(\cA)z\rVert_2^2 \\
			&= \lVert\bc(\cI - \cA_{\pi_{k,m}::}^{\dagger}\cA_{\pi_{k,m}::}) 
			\cdots \bc(\cI - \cA_{\pi_{k,i_0}::}^{\dagger}\cA_{\pi_{k,i_0}::})
			\bc(\cA^\dagger)\bc(\cA)z\rVert_2^2 \\
			& \le \lVert\bc(\cI - \cA_{\pi_{k,m}::}^{\dagger}\cA_{\pi_{k,m}::})\rVert_2^2 
			\cdots \lVert\bc(\cI - \cA_{\pi_{k,i_0}::}^{\dagger}\cA_{\pi_{k,i_0}::})\bc(\cA^\dagger)\bc(\cA)z\rVert_2^2 \\
			& \le \lVert\bc(\cI - \cA_{\pi_{k,i_0}::}^{\dagger}\cA_{\pi_{k,i_0}::})\bc(\cA^\dagger)\bc(\cA)z\rVert_2^2 < \lVert z \rVert_2^2
		\end{align*}
		as desired. This completes the proof the theorem.
	\end{proof}

	
	\subsection{Proof of Lemma~\ref{equiv} and~\ref{ensure-comput}}
	
	\begin{proof}[Proof of Lemma~\ref{equiv}]
		First, we prove ``\(
		\cA \ast \cX^k = \cB 
		\Rightarrow 
		r_{\pi_k}(\cX^k) = 0\)''. For any $1\le i\le m$, we have
		\[
		\cP_{\pi_{k,i}}(\cX^k) = \cX^k-\cA_{\pi_{k,i}::}^\dagger\ast(\cA_{\pi_{k,i}::}\ast\cX^k-\cB_{\pi_{k,i}::})=\cX^k,
		\]
		which yields $r_{\pi_k}(\cX^k)=0$ by the definition of $r_{\pi_k}(\cX^k)$ in~\eqref{def_r_k}.
		
		Next, we prove ``$r_{\pi_k}(\cX^k) = 0
		\Rightarrow
		\cP_{\pi_k}(\cX^k) = \cX^k$''. 
		By the definition of $r_{\pi_k}(\cX^k)$ in~\eqref{def_r_k}, the condition $r_{\pi_k}(\cX^k) = 0$ implies
		\[
		\cA_{\pi_{k,i}}^{\dagger} \ast \left(\cA_{\pi_{k,i}} \ast \cP_{\pi_{k,i-1}} \circ \cdots \circ \cP_{\pi_{k,1}}(\cX^k) - \cB_{\pi_{k,i}}\right)=0,\  1 \le i \le m,
		\]
		and therefore \(
		\cP_{\pi_{k,i}} \circ \cdots \circ \cP_{\pi_{k,1}}(\cX^k)
		= \cP_{\pi_{k,i-1}} \circ \cdots \circ \cP_{\pi_{k,1}}(\cX^k).
		\)
		Consequently, using the definition of $\cP_{\pi_k}(\cX^k)$ in~\eqref{full-projection-oper}, we obtain $$\cP_{\pi_k}(\cX^k)=\cP_{\pi_{k,m}} \circ \cdots \circ \cP_{\pi_{k,1}}(\cX^k)=\cP_{\pi_{k,m-1}} \circ \cdots \circ \cP_{\pi_{k,1}}(\cX^k)=\cdots=\cX^k.$$
		
	Then, we prove	``$\cP_{\pi_k}(\cX^k) = \cX^k
		\Rightarrow
		\cA \ast \cX^k = \cB$''. As in~\cite[Lemma~1]{rieger2023generalized}, 
		we have the following identity
		\begin{align}\label{successive_Pythagoras}
			\|\cX^k-\cX_*^0\|_F^2
			=
			\|\cP_{\pi_k}(\cX^k)-\cX_*^0\|_F^2
			+
			\|r_{\pi_k}(\cX^k)\|_F^2 .
		\end{align}
		Since $\cP_{\pi_k}(\cX^k)=\cX^k$, it follows that $\|r_{\pi_k}(\cX^k)\|_F^2=0$, 
		which implies $\cA \ast \cX^k = \cB$. 
		
		We now prove that for all $k \geq 0$,  
		\(
		\mathcal{A} \ast \mathcal{X}^{k} \neq \mathcal{B}\)
		implies
		\(\mathcal{A} \ast \mathcal{X}^{i} \neq \mathcal{B}\) for \(i = 0,1,\dots,k.\)
		The case $k = 0$ is immediate.  
		For $k \geq 1$, it suffices to show that
		\(
		\mathcal{A} \ast \mathcal{X}^{k} \neq \mathcal{B}\)
		implies
		\(\mathcal{A} \ast \mathcal{X}^{k-1} \neq \mathcal{B}.
		\)
		Suppose, for contradiction, that $\mathcal{A} \ast \mathcal{X}^{k} \neq \mathcal{B}$ but $\cA \ast \cX^{k-1} = \cB$.
		When $k = 1$, the contradiction hypothesis gives $\mathcal{A} \ast \mathcal{X}^{0} = \mathcal{B}$, so Lemma~\ref{equiv} yields $\mathcal{X}^{0} = \mathcal{P}_{\pi_{0}}(\mathcal{X}^{0})$.
		Then, we obtain $\Pi_{0}=\operatorname{aff}\{\cX^0,P_{\pi_0}(\cX^0)\}=\{\cX^0\}$.
		From the definition of $\cX^1$ in \eqref{RIM_subspace}, it holds that $\cX^1=\cX^0$.
		This implies $\cA \ast \cX^1 = \cB$ provided $\cA \ast \cX^0 = \cB$, which contradicts the assumption that $\cA \ast \cX^1 \neq \cB$.
		For $k\geq2$, we have $\cP_{\pi_{k-1}}(\cX^{k-1}) = \cX^{k-1}$ due to $\cA \ast \cX^{k-1} = \cB$ by Lemma \ref{equiv} and the search subspace $\Pi_{k-1}$ reduces to
		\begin{equation*}
			\Pi_{k-1} = \operatorname{aff} \left\{ \cX^{j_{k-1,\tau}}, \ldots, \cX^{k-1}, \mathcal{P}_{\pi_{k-1}}(\cX^{k-1}) \right\} = \operatorname{aff} \left\{ \cX^{j_{k-1,\tau}}, \ldots, \cX^{k-1} \right\}.
		\end{equation*}
		Since $\cX^i \in \Pi_{k-2}$ for all $i = j_{k-1,\tau}, \ldots, k-1$, the search subspace satisfies $\Pi_{k-1} \subseteq \Pi_{k-2}$.
		Given that $\cX^{k-1}$ is the minimizer of $\lVert \cX - \cX_*^0 \rVert_F^2$ over $\Pi_{k-2}$ and lies in $\Pi_{k-1} \subseteq \Pi_{k-2}$, it must also be the minimizer over $\Pi_{k-1}$, i.e. $\cX^{k-1} = \arg\min_{\cX \in \Pi_{k-1}} \| \cX - \cX^0_{*} \|_F^2.$	
		Therefore, we deduce that $\cX^k = \cX^{k-1}$. This implies $\cA \ast \cX^k = \cB$ provided $\cA \ast \cX^{k-1} = \cB$, which contradicts the assumption that $\cA \ast \cX^k \neq \cB$.
		We conclude that if $\cA \ast \cX^k \neq \cB$, then $\cA \ast \cX^{k-1} \neq \cB$.
		This completes the proof the lemma.
	\end{proof}
	
	\begin{proof}[Proof of Lemma~\ref{ensure-comput}]
		According to the definition of $\gamma_k$ in \eqref{def_gammak}, we can rewrite $\gamma_k$ as
		\begin{equation*}
			\begin{aligned}
				\gamma_k&=\left\langle P_{\pi_{k}}\left(\cX^{k}\right)-\cX^{k}, \cX_{*}^{0} - \cX^{k} \right\rangle\\
				&= 
				\frac{1}{2}\left(\|\cX^k - \cX^*\|_{F}^2 
				-
				\|\cP_{\pi_{k}}(\cX^k) - \cX^*\|_{F}^2 
				+ 
				\| \cX^k - \cP_{\pi_{k}}(\cX^k)\|_{F}^2\right)\\
				&=\frac{1}{2}\left(\|r_{\pi_k}(\cX^k)\|_F^2
				+ 
				\| \cX^k - \cP_{\pi_{k}}(\cX^k)\|_{F}^2\right),
			\end{aligned}
		\end{equation*}
		where the last equation follows from the identity \eqref{successive_Pythagoras}.
		If $\cA \ast \cX^k \neq \cB$, then by Lemma~\ref{equiv} we have
		$r_{\pi_k}(\cX^k) \neq 0$ and $\cX^k - \cP_{\pi_{k}}(\cX^k)\neq0$, and hence $\gamma_k > 0$.
		
		Next, we prove that if $\cA\ast\cX^k\neq\cB$, $M_k$ has full column rank for all $k\geq0$.
		For $k=0$, $M_0=
			\vec{d}^0$ with $\vec{d}^0=\tv(\cP_{\pi_0}(\cX^0)-\cX^0)$.
		Thus, $M_0$ is of full column rank provided that $\cP_{\pi_0}(\cX^0) - \cX^0 \neq 0$.
		From Lemma \ref{equiv}, we know $\cP_{\pi_0}(\cX^0) - \cX^0 \neq 0$ if and only if $\cA\ast \cX^0\neq\cB$.
		Hence, if $\cA\ast \cX^0\neq\cB$, $M_0$ is of full column rank.
		
		By induction, assume that if $\cA\ast\cX^{k}\neq\cB$, $M_{k}$ has full column rank.
		Suppose that $\cA\ast\cX^{k+1}\neq\cB$, we obtain $\cA\ast\cX^{k}\neq\cB$ by Lemma \ref{equiv}, which ensuring that $M_{k}$ has full column rank.
		The tube vectorization of iterate $\cX^{k+1}$ admits the following formula
		\[
		\vec{x}^{k+1} = \vec{x}^k + M_ks^k 
		= \vec{x}^k+\sum_{i=1}^{k-j_{k,\tau}}\, s_i^k \left(\vec{x}^{j_{k,\tau}+i-1}-\vec{x}^k\right) + s_{k-j_{k,\tau}+1}^k  \vec{d}^k.
		\]
		Under the assumption that $\cA\ast\cX^k\neq\cB$, we have $\gamma_k>0$, and hence, by the expression of $s_{k-j_{k,\tau}+1}^k$ in \eqref{vec_update}, $s_{k-j_{k,\tau}+1}^k\neq0$.
		Since $s_{k-j_{k,\tau}+1}^k\neq0$ and $M_k$ has full column rank, it holds that $\vec x^{j_{k,\tau}}, \ldots, \vec{x}^{k+1}$ are affinely independent,
		which implies that
		$\vec x^{j_{k,\tau}}-\vec{x}^{k+1}, \ldots, \vec{x}^{k}-\vec{x}^{k+1}$ are linearly independent.
		Suppose, for contradiction, that $M_{k+1}$ does not have full column rank, i.e. $\vec x^{j_{k+1,\tau}}-\vec{x}^{k+1}, \ldots, \vec{x}^{k}-\vec{x}^{k+1}, \vec{d}^{k+1}$ are linearly dependent.
		Then, there exist scalars $\{c_i\}_{i=j_{k+1,\tau}}^k$, not all zero, satisfying 
		\(
		\vec{d}^{k+1}
		=
		\sum_{i=j_{k+1,\tau}}^{k} c_i\, (\vec x^i - \vec x^{k+1})
		\). 
		Taking the inner product with $\vec x_*^0 - \vec x^{k+1}$ yields
		\[
		\gamma_{k+1}=\langle \vec x_*^0 - \vec x^{k+1}, \vec{d}^{k+1} \rangle
		=
		\sum_{i=j_{k+1,\tau}}^{k} c_i\,
		\langle \vec x_*^0 - \vec x^{k+1}, \vec x^i - \vec x^{k+1} \rangle
		= 0,
		\]
		where the last equality follows from $\langle \vec x_*^0 - \vec x^{k+1}, \vec x^i - \vec x^{k+1} \rangle
		= 0$ for $i=j_{k+1,\tau},\ldots,k$.
		Since $\cA\ast\cX^{k+1}\neq\cB$, we obtain $\gamma_{k+1}>0$, which leads to a contradiction.
		Therefore, for all $k\geq0$, $M_{k}$ has full column rank if $\cA\ast\cX^{k}\neq\cB$ . This completes the proof of the lemma.
	\end{proof}
	
	\subsection{Proof of Theorem~\ref{TKGK_convergence_thm}}
	The following lemma is essential for proving  Theorem~\ref{TKGK_convergence_thm}.
	\begin{lemma}\label{gamma_square}
		For any $k \ge 0$, we have the following identity
		\[
		\langle \cX^k - \cX_{*}^{0}, \cX^k - \cP_{\pi_{k}}(\cX^k) \rangle^{2}
		= 
		\|\cX^k - \cP_{\pi_{k}}(\cX^k)\|_{F}^{2}\|r_{\pi_{k}}(\cX^k)\|_{F}^{2}
		+ \langle \cP_{\pi_{k}}(\cX^k) - \cX_{*}^{0}, \cX^k - \cP_{\pi_{k}}(\cX^k) \rangle^{2}.
		\]
	\end{lemma}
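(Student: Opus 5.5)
The identity follows from two facts already established: the formula for $\gamma_k$ in Lemma~\ref{ensure-comput}, and the successive Pythagoras identity \eqref{successive_Pythagoras} used in the proof of Lemma~\ref{equiv}. With both in hand, the proof reduces to a difference of squares.

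Set $\cE_a := \cX^k - \cX_*^0$, $\cE_d := \cX^k - \cP_{\pi_k}(\cX^k)$ and $\cE_b := \cP_{\pi_k}(\cX^k) - \cX_*^0$, so that $\cE_b = \cE_a - \cE_d$. By the definition \eqref{def_gammak}, $\langle \cE_a, \cE_d\rangle = \langle \cP_{\pi_k}(\cX^k) - \cX^k, \cX_*^0 - \cX^k\rangle = \gamma_k$. Lemma~\ref{ensure-comput} then gives
\[
\langle \cE_a, \cE_d\rangle = \tfrac12\left(\|r_{\pi_k}(\cX^k)\|_F^2 + \|\cE_d\|_F^2\right).
\]
This formula holds for every $k\ge 0$: its derivation uses only \eqref{successive_Pythagoras} and polarization, not the assumption $\cA\ast\cX^k\neq\cB$.

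Next I would compute the second inner product:
\[
\langle \cE_b, \cE_d\rangle = \langle \cE_a, \cE_d\rangle - \|\cE_d\|_F^2 = \tfrac12\left(\|r_{\pi_k}(\cX^k)\|_F^2 - \|\cE_d\|_F^2\right).
\]
Factoring the difference of squares then yields
\[
\langle \cE_a, \cE_d\rangle^2 - \langle \cE_b, \cE_d\rangle^2 = \left(\langle \cE_a, \cE_d\rangle - \langle \cE_b, \cE_d\rangle\right)\left(\langle \cE_a, \cE_d\rangle + \langle \cE_b, \cE_d\rangle\right) = \|\cE_d\|_F^2\,\|r_{\pi_k}(\cX^k)\|_F^2,
\]
which is exactly the claimed identity.

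There is no real obstacle here. The only point needing care is the sign convention: one must check that $\gamma_k$ in \eqref{def_gammak} equals $\langle \cX^k - \cX_*^0, \cX^k - \cP_{\pi_k}(\cX^k)\rangle$ rather than its negative. It does, since both factors flip sign.
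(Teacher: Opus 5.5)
Your proof is correct and is essentially the paper's argument: both combine the formula $\gamma_k=\frac12\bigl(\|r_{\pi_k}(\cX^k)\|_F^2+\|\cX^k-\cP_{\pi_k}(\cX^k)\|_F^2\bigr)$ from Lemma~\ref{ensure-comput} with the relation $\langle \cP_{\pi_k}(\cX^k)-\cX_*^0,\cX^k-\cP_{\pi_k}(\cX^k)\rangle=\gamma_k-\|\cX^k-\cP_{\pi_k}(\cX^k)\|_F^2$. The only difference is cosmetic: the paper expands the square of $\|\cX^k-\cP_{\pi_k}(\cX^k)\|_F^2+\langle \cP_{\pi_k}(\cX^k)-\cX_*^0,\cX^k-\cP_{\pi_k}(\cX^k)\rangle$ and substitutes, whereas you factor the difference of squares.
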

	%
	\begin{proof}
		Recall that $\cP_{\pi_k}(\cdot)$ and $r_{\pi_k}(\cdot)$ are defined in~\eqref{full-projection-oper} and~\eqref{def_r_k}, respectively.
		Given that $\langle \cX^k - \cX_*^0,\, \cX^k - \cP_{\pi_k}(\cX^k) \rangle
		=\|\cX^k - \cP_{\pi_k}(\cX^k)\|_F^2+\langle \cP_{\pi_k}(\cX^k) - \cX_*^0,\,\cX^k - \cP_{\pi_k}(\cX^k) \rangle $, it follows that
		\begin{equation}\label{lemma7_1_proof}
			\begin{aligned}
				\langle \cX^k - \cX_*^0,\, \cX^k - \cP_{\pi_k}(\cX^k) \rangle^2
				& =
				\|\cX^k - \cP_{\pi_k}(\cX^k)\|_F^4
				+ \langle \cP_{\pi_k}(\cX^k) - \cX_*^0, \cX^k - \cP_{\pi_k}(\cX^k) \rangle^2 \\
				& \quad + 2 \|\cX^k - \cP_{\pi_k}(\cX^k)\|_F^2
				\langle \cP_{\pi_k}(\cX^k) - \cX_*^0, \cX^k - \cP_{\pi_k}(\cX^k) \rangle. 
			\end{aligned}
		\end{equation}
		From the definition of $\gamma_k$ in \eqref{def_gammak} and its equivalent form in \eqref{gammak}, we have
		$$ \langle \cX^k - \cX_*^0, \cX^k - \cP_{\pi_k}(\cX^k) \rangle=\gamma_k=\frac{1}{2}\|r_{\pi_k}(\cX^k)\|_F^2
		+
		\frac{1}{2}\|\cX^k - \cP_{\pi_k}(\cX^k)\|_F^2.$$
		Hence, we can rewrite $\langle \cP_{\pi_k}(\cX^k) - \cX_*^0,
		\cX^k - \cP_{\pi_k}(\cX^k) \rangle$ as
		\begin{equation}\label{sun_0404_2}
			\begin{aligned}
				\langle \cP_{\pi_k}(\cX^k) - \cX_*^0,
				\cX^k - \cP_{\pi_k}(\cX^k) \rangle
				& = - \|\cX^k - \cP_{\pi_k}(\cX^k)\|_F^2 + \langle \cX^k - \cX_*^0, \cX^k - \cP_{\pi_k}(\cX^k) \rangle\\
				&=\frac{1}{2}\|r_{\pi_k}(\cX^k)\|_F^2-\frac{1}{2}\|\cX^k - \cP_{\pi_k}(\cX^k)\|_F^2.
			\end{aligned}
		\end{equation}
		Substituting it into \eqref{lemma7_1_proof}, we obtain
		\[
		\langle \cX^k - \cX_*^0,\, \cX^k - \cP_{\pi_k}(\cX^k) \rangle^2 
		=
		\|\cX^k - \cP_{\pi_k}(\cX^k)\|_F^2 \|r_{\pi_k}(\cX^k)\|_F^2
		+ \langle \cP_{\pi_k}(\cX^k) - \cX_*^0,\,
		\cX^k - \cP_{\pi_k}(\cX^k) \rangle^2,
		\]
		which proves the result.
	\end{proof}
	
	\begin{proof}[Proof of Theorem~\ref{TKGK_convergence_thm}]
		We first demonstrate that $\cA\ast\cX^k=\cB$ implies that $\cX^k=\cX_*^0$.
		Suppose that $\cA\ast\cX^k=\cB$ and $\cA\ast\cX^i\neq\cB$ for $i=0,1,\ldots,k-1$. 
		From the proof of Theorem \ref{trrk_convergence_thm}, we know that $\cX^k\in\cX^0 + \operatorname{range}_K(\cA^\dagger)$, which ensures that
		$
		(I-\cA^\dagger\ast \cA)\ast\cX^{k}=(I-\cA^\dagger\ast \cA)\ast\cX^0.
		$
		Using the assumption $\cA\ast\cX^k=\cB$, we obtain
		$$\cX^k=\cA^\dagger\ast\cA\ast\cX^k+(I-\cA^\dagger\ast \cA)\ast\cX^0=\cA^\dagger\ast\cB+(I-\cA^\dagger\ast \cA)\ast\cX^0=\cX_*^0.$$	
		We next consider the case where $\cA\ast\cX^k\neq\cB$.
		Recall the iteration scheme of $\cX^{k+1}$ in Algorithm~\ref{TKGK}, which admits the vectorized form
		\(
		\vec x^{k+1}=\vec x^{k}+M_k s^k.
		\)
		Thus, we obtain
		\begin{align*}
			\|\cX^{k+1} - \cX_*^0\|_F^2
			&=\lVert \vec{x}^{k+1}-\vec{x}_*^0 \rVert_2^2\\
			& = \|\vec{x}^{k} - \vec{x}_*^0\|_2^2 
			+ (s^k)^\top M_k^\top M_k s^k
			+ 2(s^k)^\top M_k^\top(\vec x^{k}-\vec x_*^0) \\
			&=\|\vec{x}^{k} - \vec{x}_*^0\|_2^2 
			-(s^k)^\top M_k^\top M_k s^k\\
			&=\|\vec{x}^{k} - \vec{x}_*^0\|_2^2 
			- \gamma_k s^{k}_{k-j_{k,\tau}+1}\\
			& = \|\cX^{k} - \cX_*^0\|_F^2 - \gamma_k s^{k}_{k-j_{k,\tau}+1},
		\end{align*}
		where the third and fourth equalities follow from \eqref{optcond2} and \eqref{eq-s_k}, respectively.
		Using the equivalent form of $s^{k}_{k-j_{k,\tau}+1}$ in \eqref{oversk_undersk}, we can further express it as
		\begin{align*}
			s^{k}_{k-j_{k,\tau}+1}
			=  \frac{\gamma_k}{\lVert (I-V_kV_k^\dagger)\vec{d}^k\rVert_2^2}
			=\frac{\gamma_k}{\|\vec d^k\|_2^2}
			\left(1-\frac{\|V_k V_k^\dagger\vec d^k\|_2^2}{\|\vec d^k\|_2^2}\right)^{-1},
		\end{align*}
		where the last equality follows from 
		$\lVert (I-V_kV_k^\dagger)\vec{d}^k\rVert_2^2=\lVert\vec{d}^k\rVert_2^2- \lVert V_kV_k^\dagger\rVert_2^2\|\vec{d}^k\|^2_2$. 
		Recall the definitions of \(\gamma_k\) in~\eqref{def_gammak} and $\zeta_k$ in \eqref{rate-betak}, 
		it holds that
		\(
		\frac{\gamma_k^2}{\|\vec d^k\|_2^2} = \zeta_k^2\|\cX^{k}-\cX_*^0\|_F^2.
		\)
		As 
		\(
		\beta_k= \left(1 - \frac{\|V_k V_k^\dagger\vec d^k\|_2^2}{\|\vec d^k\|_2^2}\right)^{-1}
		\)  in~\eqref{rate-betak},
		we conclude that $s^{k}_{k-j_{k,\tau}+1}=\frac{\beta_k\zeta_k^2\|\cX^{k}-\cX_*^0\|_F^2}{\gamma_k}$ and
		\begin{align*}
			\|\cX^{k+1} - \cX_*^0\|_F^2 \leq (1-\beta_k\zeta_k^2)\|\cX^{k} - \cX_*^0\|_F^2.
		\end{align*}
		
		Next, we prove that $1 - \beta_k\zeta_{k}^2 \le \|\bc\left(T_{\pi_{k}} \ast \cA^{\dagger} \ast \cA\right) \|_{2}^2$.
		Since \(M_k\) has full column rank, it follows that \(\vec d^k\notin\mathrm{Range}(V_k)\).
		Consequently, because \(V_k V_k^\dagger\) is the orthogonal projector onto \(\mathrm{Range}(V_k)\),
		it holds that 
		\(
		0 < 1 - \frac{\|V_k V_k^\dagger\vec d^k\|_2^2}{\|\vec d^k\|_2^2} \le 1,
		\)
		which implies that $\beta_k\geq1$.
		Therefore, to establish $1 - \beta_k\zeta_{k}^2 \le \|\bc\left(T_{\pi_{k}} \ast \cA^{\dagger} \ast \cA\right) \|_{2}^2$, it suffices to prove the inequality $1 - \zeta_{k}^2 \le \|\bc\left(T_{\pi_{k}} \ast \cA^{\dagger} \ast \cA\right) \|_{2}^2$. 
		From the definition of $\zeta_k$ in \eqref{rate-betak} and  Lemma~\ref{gamma_square}, $1-\zeta_k^2$ is expressed as 
		\begin{align}\label{decomposed_zeta_k}
			1 - \zeta_{k}^2 
			& = 
			1 - \frac{\left\|r_{\pi_k}(\cX^k)\right\|_F^2}{\left\|\cX^k - \cX_*^0\right\|_F^2} 
			- \frac{\langle\cP_{\pi_k}(\cX^k) - \cX_*^0, \cX^k - \cP_{\pi_k}(\cX^k)\rangle^2}
			{\left\|\cX^k - \cX_*^0\right\|_F^2\left\|\cX^k - \cP_{\pi_k}(\cX^k)\right\|_F^2}.
		\end{align}
		Using the equation in \eqref{successive_Pythagoras},
		We further obtain 
		\[
		1 - \zeta_k^2 
		\le
		1 - \frac{\left\|r_{\pi_k}(\cX^k)\right\|_F^2}{\left\|\cX^k - \cX_*^0\right\|_F^2}
		= 
		\frac{\left\|\cP_{\pi_k}(\cX^k) - \cX_*^0\right\|_F^2}{\left\|\cX^k - \cX_*^0\right\|_F^2} .
		\] 
		From the inequality \eqref{sun_0404} in the proof of Theorem~\ref{trrk_convergence_thm}, it holds that
		$$\left\|\cP_{\pi_k}(\cX^k) - \cX_*^0\right\|_F^2\le
		\left\|\bc(\cT_{\pi_k} \ast \cA^{\dagger} \ast \cA)\right\|_2^2\left\|\cX^k - \cX_*^0\right\|_F^2,$$ 
		which results in $1 - \zeta_k^2 
		\le\left\|\bc(\cT_{\pi_k} \ast \cA^{\dagger} \ast \cA)\right\|_2^2.$
		
		If $\|r_{\pi_k}(\cX^k)\|_F^2
		\neq
		\|\cX^k - \cP_{\pi_k}(\cX^k)\|_F^2$, then the last term of~\eqref{decomposed_zeta_k} gives
		\[
		\frac{\langle\cP_{\pi_k}(\cX^k) - \cX_*^0, \cX^k - \cP_{\pi_k}(\cX^k)\rangle^2}
		{\left\|\cX^k - \cX_*^0\right\|_F^2\left\|\cX^k - \cP_{\pi_k}(\cX^k)\right\|_F^2}
		=
		\frac{\left(\|r_{\pi_k}(\cX^k)\|_F^2
			-
			\|\cX^k - \cP_{\pi_k}(\cX^k)\|_F^2\right)^2}{4\left\|\cX^k - \cX_*^0\right\|_F^2\left\|\cX^k - \cP_{\pi_k}(\cX^k)\right\|_F^2} >0, 
		\]
		where the equality follows from~\eqref{sun_0404_2}.
		Hence we complete the proof. 
	\end{proof}

	\subsection{Proof of Proposition~\ref{basis_update_lemma} and Theorem~\ref{thm_equiv_Algo2_Algo3}} \label{appendix_sec_4}

	\begin{proof}[Proof of Proposition~\ref{basis_update_lemma}]
		We prove by induction on $k \ge 1$ that the sequence $\{\cU_i\}_{i=j_{k,\tau}}^k$ is orthogonal, 
		i.e., $\langle \cU_i, \cU_j \rangle = 0$ for all $i \ne j$ with $j_{k,\tau} \le i, j \le k$.
		For $k=1$, we have
		\[
		\langle \cU_0, \cU_1 \rangle
		=
		\langle \cU_0, \cD_1 \rangle
		-
		\frac{\langle \cU_0,\cD_1\rangle}{\|\cU_0\|_F^2}
		\|\cU_0\|_F^2
		= 0.
		\]
		Assume that for some $k \geq 1$, $\langle \mathcal{U}_i, \mathcal{U}_j \rangle = 0$, for all $i \neq j$ with $j_{k,\tau} \leq i,j \leq k$.
		Since $j_{k,\tau}\leq j_{k+1,\tau}$, 
		the induction hypothesis implies that $\langle\cU_i, \cU_j\rangle=0$ for all $j_{k+1,\tau} \le i \neq j \le k$. 
		It remains to show that 
		\(
		\langle \mathcal{U}_{k+1}, \mathcal{U}_j \rangle = 0 \) for all  $j$ satisfying $j_{k+1,\tau} \leq j \leq k.$ 
		From the definition of $\cU_{k+1}$ in \eqref{xie-eq-0323},
		for any $j$ such that $j_{k+1,\tau} \le j \le k$, we get
		\begin{align*}
			\langle \cU_j, \cU_{k+1} \rangle
			&=
			\langle \cU_j, \cD_{k+1} \rangle
			-
			\sum_{i=j_{k+1,\tau}}^{k}
			\frac{\langle \cU_i,\cD_{k+1}\rangle}{\|\cU_i\|_F^2}
			\langle \cU_j, \cU_i \rangle.
		\end{align*} 
		By the induction hypothesis, $\langle \cU_j, \cU_i \rangle = 0$ for all $i \neq j$, hence we have
		\[
		\langle \cU_j, \cU_{k+1} \rangle
		=
		\langle \cU_j, \cD_{k+1} \rangle
		-
		\frac{\langle \cU_j,\cD_{k+1}\rangle}{\|\cU_j\|_F^2}
		\|\cU_j\|_F^2
		= 0.
		\]
		This completes the proof of the orthogonality property.
		
		Next, we  prove that $\operatorname{span}\{\mathcal{U}_{j_{k,\tau}}, \dots, \mathcal{U}_{k-1}\} = \operatorname{span}\{\mathcal{X}^{j_{k,\tau}} - \mathcal{X}^{k}, \dots, \mathcal{X}^{k-1} - \mathcal{X}^{k}\}$. 
		Consider the equivalent expression of the subspace 
		\begin{align*}
			\operatorname{span}\{\mathcal{U}_{j_{k,\tau}}, \dots, \mathcal{U}_{k-1}\} = \operatorname{span}\{\mathcal{X}^{j_{k,\tau}+1} - \mathcal{X}^{j_{k,\tau}}, \dots, \mathcal{X}^{k} - \mathcal{X}^{k-1}\},
		\end{align*}
		where this equality follows directly from the relation $\mathcal{X}^{i+1} - \mathcal{X}^i = \lambda_i \mathcal{U}_i$ with $\lambda_i \neq 0$.
		It therefore suffices to show that
		$
		\operatorname{span}\{\mathcal{X}^{j_{k,\tau}+1} - \mathcal{X}^{j_{k,\tau}}, \dots, \mathcal{X}^{k} - \mathcal{X}^{k-1}\}
		=\{
		\operatorname{span}\mathcal{X}^{j_{k,\tau}} - \mathcal{X}^{k}, \dots, \mathcal{X}^{k-1} - \mathcal{X}^{k}\}.
		$
		Note that for any $j_{k,\tau} \le i \le k-1$, 
		$$
		\cX^{i+1} - \cX^{i} = \cX^{i+1} - \cX^k - (\cX^{i} - \cX^k) \in \spn\{\cX^{j_{k,\tau}}-\cX^k,\ldots,\cX^{k-1}-\cX^k\},
		$$
		hence $\operatorname{span}\{\mathcal{X}^{j_{k,\tau}+1} - \mathcal{X}^{j_{k,\tau}}, \dots, \mathcal{X}^{k} - \mathcal{X}^{k-1}\}\subseteq\spn\{\cX^{j_{k,\tau}}-\cX^k,\ldots,\cX^{k-1}-\cX^k\}.$ 
		Conversely, for any $j_{k,\tau} \le i \le k-1$, 
		$$
		\cX^i - \cX^k = - (\cX^{i+1} - \cX^i + \cdots + \cX^k - \cX^{k-1}) 
		\in \spn\{\cX^{j_{k, t}+1} - \cX^{j_{k,\tau}}, \ldots, \cX^{k} - \cX^{k-1}\},
		$$
		and therefore $\spn\{\cX^{j_{k,\tau}}-\cX^k,\ldots,\cX^{k-1}-\cX^k\}\subseteq\spn\{\cX^{j_{k, t}+1} - \cX^{j_{k,\tau}}, \ldots, \cX^{k} - \cX^{k-1}\}.$ 
		Thus we prove the two linear spans coincide. 
	\end{proof}
	
	\begin{proof}[Proof of Theorem~\ref{thm_equiv_Algo2_Algo3}]
		It suffices to show that the vectorized iterates coincide.
		We prove by induction, and the case $k=0$ is immediate.
		
		Assume that $\vec{x}^i$ coincide for all $i \le k$. Then the subspace
		\(
		\operatorname{Range}(V_k) = \spn\{\vec{x}^{j_{k,\tau}}-\vec{x}^k, \ldots, \vec{x}^{k-1}-\vec{x}^k\}
		\)
		is identical for both algorithms.
		Since $\{\pi_k\}_{k \ge 0}$ is identical, both algorithms compute the same $\vec{d}^k$ and $\gamma_k$.
		By Proposition~\ref{basis_update_lemma}, Algorithm~\ref{G-S-based TKGK} constructs 
		an orthogonal basis $\{\vec u_i\}_{i=j_{k,\tau}}^{k-1}$ of $\operatorname{Range}(V_k)$.  
		Let $U_k = [\vec u_{j_{k,\tau}},\ldots,\vec u_{k-1}]$, then 
		\(
		\operatorname{Range}(U_k) = \operatorname{Range}(V_k),
		\)
		and hence the corresponding orthogonal projectors coincide, i.e.,
		\[
		U_k(U_k^\top U_k)^{-1}U_k^\top = V_k V_k^\dagger.
		\]
		Thus the Step~8 in Algorithm~\ref{G-S-based TKGK} produces the update direction
		\[
		\vec{u}_k = \vec d^k - \sum_{i=j_{k,\tau}}^{k-1}\frac{\langle \vec u_i,\vec d^k\rangle}{\|\vec u_i\|_2^2}\vec u_i
		= \left( I - U_k(U_k^{\top}U_k)^{-1}U_k^{\top}\right) \vec d^k = (I - V_k V_k^\dagger)\vec d^k.
		\]
		From \eqref{vec_update}, Algorithm~\ref{TKGK} updates along the direction
		\(
		(I - V_k V_k^\dagger)\vec d^k
		\)
		with stepsize $\underline{s^k} = \frac{\gamma_k}{\langle\vec{d}^k, (I - V_kV_k^{\dagger})\vec d^k\rangle}$.
		Therefore, the update directions coincide.
		Moreover, from \eqref{idempotent_I-V_kV_kdagger},
		$\underline{s^k}$ coincides exactly with the stepsize
		$\lambda_k = \frac{\gamma_k}{\|\vec{u}_k\|_2^2}$ computed as Step~4 in Algorithm~\ref{G-S-based TKGK}.
		Therefore, both algorithms generate the same $\vec{x}^{k+1}$.
		By induction, the sequences $\{\vec{x}^k\}_{k \ge 0}$ coincide. 
	\end{proof}

\end{document}